\newtheorem{theorem}{Theorem}[section]  
\newtheorem{lemma}[theorem]{Lemma} 
\newtheorem{proposition}[theorem]{Proposition} 
\newtheorem{question}{Question} 
\newtheorem{corollary}[theorem]{Corollary}
\theoremstyle{definition}
\newtheorem{definition}[theorem]{Definition}
\newtheorem*{remark*}{Remark}
\newcommand{\Z}{\mathbb{Z}}
\newcommand{\R}{\mathbb{R}}
\newcommand{\C}{\mathbb{C}}
\begin{document}

\title{Successively almost positive links }

\newcommand{\Crossing}{
\raisebox{-3mm}{
\begin{picture}(24,28)
\put(0,0){\line(1,1){10}}
\put(0,24){\vector(1,-1){24}}
\put(14,14){\vector(1,1){10}}
\end{picture} } 
}

\newcommand{\Smooth}{
\raisebox{-3mm}{
\begin{picture}(24,28)
\qbezier(0,0)(12,14)(24,0)
\qbezier(0,24)(12,10)(24,24)
\end{picture}} 
}

\newcommand{\LCross}{
\raisebox{-3mm}{
\begin{picture}(24,28)
\put(0,0){\line(1,1){24}}
\put(0,24){\line(1,-1){24}}
\end{picture}}
}

\author{Tetsuya Ito}
\address{Department of Mathematics, Kyoto University, Kyoto 606-8502, JAPAN}
\email{tetitoh@math.kyoto-u.ac.jp}

\subjclass[2020]{Primary~57K10}

\begin{abstract}
As an extension of positive or almost positive diagrams and links, we introduce a notion of successively almost positive diagrams and links, and good successively almost positive diagrams and links. We review various properties of positive links or almost positive links, and explain how they can be extended to (good) successively almost positive links. Our investigation also leads to an improvement of known results of positive or almost positive links.
\end{abstract}
\maketitle

\section{Introduction}

A link diagram is \emph{positive} if all the crossings are positive, and a \emph{positive link} is a link that can be represented by a positive diagram. Positive links have various nice properties and form an important class of links.

An innocent generalization of a positive diagram is a \emph{$k$-almost positive diagram}, a diagram such that all but $k$ crossings are positive. A $1$-almost positive diagram is usually called an \emph{almost positive diagram} and has been studied in various places.

It is known that almost positive links share various properties with positive links. As is discussed in \cite{pt} there are special properties of $2$ or $3$-almost positive links. However when $k$ is large, $k$-almost positive links fail to have nice properties similar to positive links because every knot $K$ is $k$-almost positive for sufficiently large $k$.

The aim of this paper is to propose a better and more natural generalization of a (almost) positive diagram which we call a \emph{successively ($k$-)almost positive diagram}. We also introduce a \emph{good successively ($k$-)almost positive diagram} which is a successively positive diagram having an additional condition. We show (good) successively almost positive links share various properties with (almost) positive links, for \emph{all} $k\geq 0$.

The paper has an aspect of survey of (almost) positive links; we review various properties of (almost) positive links which can be found in various places, sometimes in a different context or prospect\footnote{For example, properties of positive links discussed in \cite{cr} was obtained as a corollary of properties of more general class of links called \emph{homogeneous links}.}. 
We try to make it clear how the proof of the properties of (almost) positive links utilizes or reflects the positivity of diagrams, and explain how to extend the proof to a successively almost positive case.

Besides the extension of known properties, our investigation sometimes leads to an improvement of known results or new results of (almost) positive links themselves. See Corollary \ref{cor:HOMFLY-almost-positive}, Corollary \ref{cor:sig-vs-euler}, and Corollary \ref{cor:finite-almost-positive} and Theorem \ref{theorem:almost-type-II}.

In the following, for a link diagram $D$ we use the following notations.

\begin{itemize}
\item $s(D)$ is the number of Seifert circles.
\item $c(D)$ is the number of crossings, and $c_{\pm}(D)$ is the number of positive and negative crossings.
\item $w(D)(=c_+(D)-c_-(D))$ is the writhe.
\item $S_D$ is the \emph{canonical Seifert surface} of $D$, the Seifert surface obtained from $D$ by applying Seifert's algorithm.
\item $\chi(D)=s(D)-c(D)=\chi(S_D)$ is the \emph{euler characteristic} of $D$. When $K$ is a knot, we often use the \emph{genus} $g(D)=\frac{1-\chi(D)}{2}$ instead of $\chi(D)$.
\end{itemize}

\subsection{Successively $k$-almost positive diagram}

\begin{definition}[Successively $k$-almost positive diagram/link]
A knot diagram $D$ is \emph{successively $k$-almost positive} if all but $k$ crossings of $D$ are positive, and the $k$ negative crossings appear successively along a single overarc (see Figure \ref{fig:succ-almost-positive}).
\end{definition}

\begin{figure}[htbp]
\includegraphics*[width=45mm]{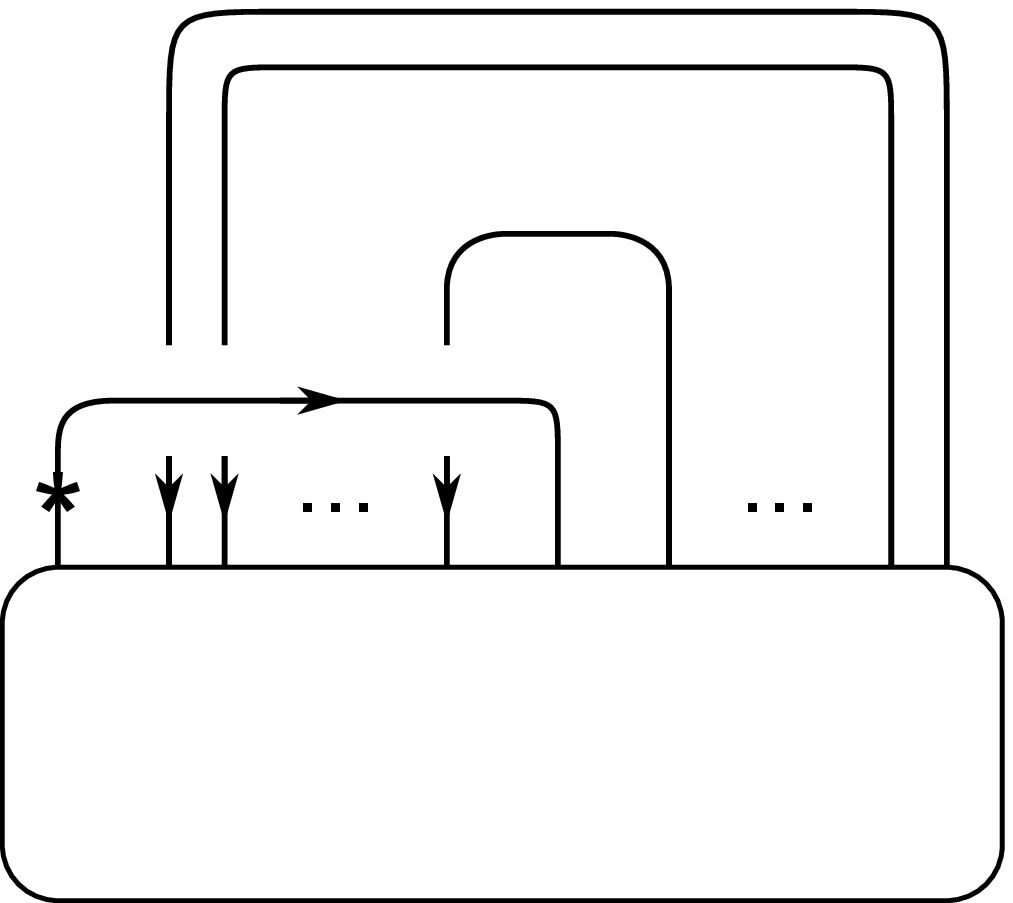}
\begin{picture}(0,0)
\put(-110,20) {\large Positive diagram}
\end{picture}
\caption{Successively $k$-almost positive diagram. $\ast$ represents the standard base point, which will be used in Section \ref{section:skein}.} 
\label{fig:succ-almost-positive}
\end{figure} 

This type of diagram appeared in \cite[Theorem 5.3]{imt} as an extension of positive diagram, designed so that the technique of constructing generalized torsion elements can be applied.

The definition says that a successively $0$-almost positive (resp. successively $1$-almost positive) diagram is nothing but a positive (resp. almost positive) diagram. On the other hand, a $2$-almost positive diagram is not necessarily a successively $2$-almost positive diagram; the two-crossing diagram of the negative Hopf link is $2$-almost positive but not successively $2$-almost positive.

As we will see and discuss later, an investigation of good properties of diagrams themselves leads to the following restricted class of successively $k$-almost positive diagrams.

\begin{definition}[Good successively $k$-almost positive diagram/link]
\label{definition:good}
A $k$-successively almost positive diagram $D$ is \emph{good} if, when two distinct Seifert circles $s,s'$ of $D$ are connected by a negative crossing then there are no other crossings connecting $s$ and $s'$ (see the left figure of Figure \ref{fig:MP-move2} for a schematic illustration).
\end{definition}

A \emph{successively almost positive link} is a link represented by a successively $k$-almost positive diagram for some $k$. Similarly, a \emph{good successively almost positive link} is a link represented by a good successively $k$-almost positive diagram for some $k$.

\subsection{Properties}

We show the following properties, generalizing (almost) positive link cases (\cite{cr},\cite{pt}).

\begin{theorem}
\label{theorem:signature}
If $K$ is successively $k$-almost positive, then its Levine-Tristram signature\footnote{We remark that we adopt the convention that the positive trefoil has negative signature, as opposed to the one adopted in \cite{bdl}.} $\sigma_{\omega}$ satisfies $\sigma_\omega(K)\leq 0$ for all $\omega \in S^{1} =\{z \in \C \: | \: |z|=1\}$.
\end{theorem}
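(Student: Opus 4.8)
The plan is to reduce the statement to a bound on the signature of the canonical Seifert surface $S_D$ of a successively $k$-almost positive diagram $D$, and then analyze the Seifert form directly. Recall that for any diagram $D$ the Levine–Tristram signature $\sigma_\omega(K)$ is computed as the signature of the Hermitian matrix $(1-\omega)V + (1-\overline\omega)V^{T}$, where $V$ is a Seifert matrix associated to $S_D$ with respect to some basis of $H_1(S_D)$. So it suffices to show this Hermitian form is negative semidefinite for every $\omega\in S^1$. The strategy is to choose a geometrically natural basis of $H_1(S_D)$ coming from the Seifert circles and crossings — the standard "Seifert graph" basis, in which generators correspond to the bounded faces of the Seifert graph (equivalently, independent cycles built from parallel bands between adjacent Seifert circles) — and to exploit that all but the $k$ successive negative crossings are positive.

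First I would set up the Seifert graph $G_D$: vertices are Seifert circles, edges are crossings, signed by the sign of the crossing. Because $D$ is successively $k$-almost positive, the $k$ negative edges all come from crossings lying on a single overarc, hence they form a very constrained configuration in $G_D$. I would pick a spanning tree of $G_D$ and take the basis of $H_1(S_D)\cong H_1(G_D)$ given by the fundamental cycles of the non-tree edges; better, following the usual treatment of positive/homogeneous links, I would choose generators so that each basis cycle is supported on a single pair of adjacent Seifert circles (a "doubled band"), which is possible after an isotopy of the diagram or by working with the block structure of $V$. In the purely positive case this basis makes $V+V^T$ negative definite (each diagonal block is, up to sign, a path/cycle incidence form), which is exactly the classical computation behind $\sigma_\omega\le 0$ for positive links; I want to check that introducing the $k$ negative crossings along one overarc perturbs only a controlled set of basis elements.

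The key step — and the main obstacle — is to handle the contribution of the negative crossings. Here the goodness hypothesis is presumably the clean case but the theorem is stated for all successively $k$-almost positive $K$, so one must argue in general. The point I would push is that since the negative crossings are successive along one overarc, they are "nested" in the Seifert surface: one can find a single properly embedded arc (the piece of the overarc carrying all $k$ negative crossings) meeting $S_D$ in a controlled way, so that at most one homology class (or a small, explicitly described collection) feels the negative crossings, while all other basis cycles remain supported in the positive part and keep their negative-definite local form. Concretely I expect to write $V = V_0 + N$ where $V_0$ is the Seifert matrix of the positive sub-diagram (with $V_0+V_0^T$ negative definite) and $N$ is supported in a single row/column block of bounded rank; then $(1-\omega)V+(1-\overline\omega)V^T$ is a negative definite matrix plus a low-rank Hermitian perturbation, and I would need that perturbation not to create a positive eigenvalue — which should follow from a direct examination of the $2\times 2$ (or small) blocks coming from the overarc, together with the fact that a band added along an overarc contributes a hyperbolic-type or negative summand rather than a positive one. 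Establishing this local sign computation, uniformly in $\omega\in S^1$, is where the real work lies; everything else is bookkeeping with the Seifert graph and the standard Levine–Tristram machinery.
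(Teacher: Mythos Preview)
Your approach has a genuine gap at its foundation: you set out to prove that the Hermitian form $(1-\omega)V+(1-\overline{\omega})V^{T}$ on the canonical Seifert surface is negative semidefinite, but this is strictly stronger than $\sigma_\omega(K)\le 0$ and is simply false, already for positive braid closures. For large torus knots $T(p,q)$ the canonical Seifert surface is the minimal-genus fiber surface, yet $|\sigma(T(p,q))|<2g(T(p,q))$ (the ratio $|\sigma|/2g$ tends to about $1/2$ as $p,q\to\infty$), so the symmetrized Seifert form necessarily has positive eigenvalues. Your asserted ``classical computation'' that $V+V^{T}$ is negative definite for positive diagrams is therefore incorrect, and the whole perturbation scheme built on top of it (write $V=V_0+N$, show $N$ cannot produce a positive eigenvalue) cannot get started: $V_0$ already has positive eigenvalues in general.

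The paper's argument is entirely different and much shorter; it never touches the Seifert form. One places a base point on the overarc just before the $k$ negative crossings and walks along the diagram. Because the negative crossings are successive on a single overarc, they are all traversed first as overcrossings, so they are already ``descending'' from this base point. Consequently the descending skein resolution tree only ever performs skein moves at positive crossings (Proposition~\ref{prop:positive-resolution}). This means $K$ can be converted to an unlink by a sequence of positive-to-negative crossing changes; since $\sigma_\omega(K_+)\le\sigma_\omega(K_-)$ at each step and $\sigma_\omega$ vanishes on the unlink, one gets $\sigma_\omega(K)\le 0$. The ``successive along one overarc'' hypothesis is used exactly once, to guarantee the base point can be chosen so that all negative crossings are automatically descending.
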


\begin{theorem}
\label{theorem:conway-non-negative}
If $K$ is successively almost positive, the Conway polynomial $\nabla_K(z)$ is non-negative.
\end{theorem}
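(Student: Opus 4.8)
The plan is to mirror the skein-theoretic proofs of the positive case (\cite{cr}) and the almost positive case (\cite{pt}), running an induction in which the successive negative crossings are removed one at a time. Throughout, ``non-negative'' means that every coefficient of $\nabla_K(z)\in\Z[z]$ is $\geq 0$, and I use the Conway skein relation $\nabla_{L_+}-\nabla_{L_-}=z\,\nabla_{L_0}$, together with $\nabla_{\mathrm{unknot}}=1$ and the vanishing of $\nabla$ on split links.

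First I would record the base case $k=0$, that is, that positive links have non-negative Conway polynomial. Its proof is an induction on the number of crossings of a positive diagram $D$: if two Seifert circles of $D$ are joined by a parallel pair of positive crossings, apply the skein relation at one of them; switching that crossing to negative cancels a bigon by a Reidemeister~II move and yields a positive diagram with fewer crossings, while the oriented resolution also yields a positive diagram with fewer crossings, so both terms of $\nabla_D=\nabla_{D_-}+z\,\nabla_{D_0}$ are non-negative by induction; the diagrams admitting no such parallel pair reduce, after Reidemeister~I and~II moves, to a short list of elementary links checked by hand.

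For the inductive step, let $D$ be successively $k$-almost positive with negative crossings $c_1,\dots,c_k$ occurring consecutively along a single overarc $\alpha$, and let $c_1$ be the first of them. Applying the skein relation at $c_1$ (which plays the role of $L_-$) gives $\nabla_K=\nabla_{D'}-z\,\nabla_{D_0}$, where $D'$ is $D$ with $c_1$ switched to positive and $D_0$ is the oriented resolution of $D$ at $c_1$. The crucial diagrammatic point — and the place where one genuinely uses that the negative crossings are successive on one overarc, not merely that there are $k$ of them — is that both children remain successively almost positive of smaller complexity: $D'$ is successively $(k-1)$-almost positive because $c_2,\dots,c_k$ still lie consecutively along a single overarc (a sub-arc of $\alpha$), and $D_0$ is successively $(k-1)$-almost positive with one crossing fewer because the resolution splits $\alpha$ but leaves $c_2,\dots,c_k$ consecutive on the resulting overarc. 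Hence $\nabla_{D'},\nabla_{D_0}\geq 0$ by induction.

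The main obstacle is the sign: $\nabla_K=\nabla_{D'}-z\,\nabla_{D_0}$ does not by itself give $\nabla_K\geq 0$, and this issue is already present in the almost positive case $k=1$. I expect the crux to be handling the block $c_1,\dots,c_k$ on $\alpha$ as a whole rather than one crossing at a time. Concretely, I would strengthen the inductive statement so as to also compare $\nabla_K$ with $\nabla_{\widehat D}$, where $\widehat D$ is the positive diagram obtained by switching all of $c_1,\dots,c_k$, and show that the correction $z\,\nabla_{D_0}$ is exactly absorbed — that is, $\nabla_{D'}$ dominates $z\,\nabla_{D_0}$ coefficientwise — using the control on degrees and top coefficients of Conway polynomials of positive and successively almost positive diagrams coming from the canonical Seifert surface (Cromwell's genus estimate and its successively almost positive analogue). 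An alternative route, parallel to the proof of Theorem~\ref{theorem:signature}, is to compute $\nabla_K(z)$ from a Seifert matrix of $S_D$ and argue that, since the negative bands sit along a single overarc, the symmetrized Seifert form is perturbed from the positive (definite) case in a sufficiently controlled way to keep the coefficients non-negative; the bookkeeping in either approach is where the real work lies.
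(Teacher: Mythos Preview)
Your proposal has a genuine gap: you are resolving at the negative crossings, which produces the wrong sign in the skein relation, and you then try to repair this with a domination or Seifert-matrix argument that you do not actually carry out. Neither of your suggested fixes is straightforward---showing that $\nabla_{D'}$ dominates $z\nabla_{D_0}$ coefficientwise would require substantial new input, and the Seifert-matrix route is even vaguer.

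The paper's proof avoids this difficulty entirely by never resolving at a negative crossing. The key idea you are missing is the choice of base point. Place the base point $\ast$ on the overarc of $D$ immediately before the first negative crossing, and build Cromwell's \emph{descending} skein resolution tree from $\ast$. Walking along the diagram from $\ast$, you pass all $k$ negative crossings as the \emph{over}-strand first, so the descending condition is already satisfied at each of them; the first non-descending crossing you meet is therefore positive. The same is true after every crossing change or smoothing produced by the algorithm, since these only occur at positive crossings and leave the initial segment through the $k$ negative crossings intact. Hence every node of the resolution tree is a skein triple of the form $(D_+,D_-,D_0)$, and the skein relation reads $\nabla_{K_+}=\nabla_{K_-}+z\nabla_{K_0}$ with a plus sign throughout. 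Non-negativity of $\nabla_K$ then follows by induction on the tree, using $\nabla_{U_1}=1$ and $\nabla_{U_n}=0$ for $n>1$ at the leaves.

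In short, the successive-overarc hypothesis is not used to control the children of a resolution at a negative crossing; it is used to arrange that the negative crossings are \emph{never} the site of a resolution.
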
 
Here we say that a polynomial $f(z)=a_0+a_1z+\cdots$ is \emph{non-negative} if $a_i\geq 0$ for all $i$. 

A positive diagram $D$ has the following nice properties\footnote{The first statement can be confirmed by looking at the linking number, and the second statement follows from the Bennequin inequality or \cite[Corollary 4.1]{cr}};
\begin{itemize}
\item $D$ represents a split link if and only if the diagram $D$ is split.
\item The canonical Seifert surface $S_D$ attains the maximum euler characteristic among its Seifert surfaces; $\chi(D)=\chi(K)$.
\end{itemize}

It is easy to see these properties fail, even for almost positive diagrams. However, a good successively almost positive diagram has the same properties.

\begin{theorem}
\label{theorem:split-visible}
A good successively $k$-almost positive diagram $D$ represents a split link if and only if $D$ is non-split. 
\end{theorem}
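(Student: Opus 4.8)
The claim is that a good successively $k$-almost positive diagram $D$ is non-split (as a diagram) precisely when the link it represents is non-split. One direction is trivial: if $D$ is split as a diagram, the link is visibly split. For the converse, assume $D$ is connected as a diagram; I want to show the represented link $L$ is non-split. The natural tool is the Seifert graph (or block graph) $\Gamma_D$, whose vertices are the Seifert circles of $D$ and whose edges are the crossings. Since $D$ is a connected diagram, $\Gamma_D$ is a connected graph. In the purely positive case one argues via linking numbers: any two components that share a crossing have nonzero linking number (all contributions have the same sign and cannot cancel), and connectivity of $\Gamma_D$ then propagates non-splitness across the whole link. The plan is to adapt this linking-number argument, using the ``good'' hypothesis to control the finitely many negative crossings.

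The key step is the following. Suppose, for contradiction, that $L = L_1 \sqcup L_2$ is split but $D$ is connected. Then there is an edge of $\Gamma_D$ — i.e.\ a crossing $c$ of $D$ — joining a Seifert circle belonging to (the sub-diagram of) $L_1$ to one belonging to $L_2$; choose such a crossing. Actually, it is cleaner to work with the ``reduced'' incidence: between any two Seifert circles $s, s'$, look at the bundle of crossings connecting them. If all such crossings are positive, the standard argument shows the two strands cannot be isotoped apart without passing through each other, so $s$ and $s'$ lie in the same split component; more precisely, $\mathrm{lk}$ of the relevant sub-links is a sum of $\pm\frac12$ contributions all of the same sign, hence nonzero. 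If instead some crossing between $s$ and $s'$ is negative, the good hypothesis (Definition \ref{definition:good}) guarantees that this negative crossing is the \emph{only} crossing connecting $s$ and $s'$. In that case $s$ and $s'$ are joined by a single crossing; in the block/Seifert graph this is a cut edge, and the two Seifert circles can be realized as a connected-sum-type clasp — a single crossing still forces the two pieces to be geometrically linked (a single clasp between two circles is not split), so again $s$ and $s'$ end up in the same split component of $L$.

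Combining the two cases: for every edge $\{s,s'\}$ of $\Gamma_D$, the Seifert circles $s$ and $s'$ belong to the same split component of $L$. Since $\Gamma_D$ is connected, \emph{all} Seifert circles lie in one split component, i.e.\ $L$ is non-split, contradicting the assumption. The argument shows the good hypothesis is exactly what is needed: it is precisely the configuration of two Seifert circles joined by a negative crossing together with other (positive) crossings — where the positive and negative contributions to the linking number could cancel — that is excluded, and this is the only way a connected positive-type diagram could fail to be non-split.

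The main obstacle I anticipate is making the ``single crossing between $s$ and $s'$ forces non-splitness'' step rigorous in the presence of the global negativity: one must be careful that a crossing of $D$ need not join two \emph{distinct} link components (the two strands at a crossing can belong to the same component), so the bare linking-number argument does not immediately apply. The fix is to phrase everything in terms of an annulus/sphere separating argument, or equivalently in terms of the canonical Seifert surface $S_D$: since $D$ is connected, $S_D$ is a connected surface with $\partial S_D = L$, and a connected Seifert surface cannot have a split boundary. This reduces the whole theorem to the observation that $D$ connected $\Leftrightarrow$ $S_D$ connected, which is immediate from Seifert's algorithm and the structure of $\Gamma_D$ — and here the good hypothesis is not even needed, suggesting the linking-number formulation above is the ``intended'' proof and the good hypothesis is really there to make the euler-characteristic statement (the parenthetical second property) work, which presumably is handled in a companion result.
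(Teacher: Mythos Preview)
Your proof has a genuine gap. The final ``fix'' you propose --- that a connected Seifert surface cannot have split boundary --- is simply false. The standard planar annulus is a connected oriented surface whose boundary is the $2$-component unlink; more to the point, take any connected diagram of the $2$-component unlink (say, two circles related by a Reidemeister~II move, one positive and one negative crossing). The canonical Seifert surface $S_D$ is connected, yet the link is split. Your reduction, if valid, would prove that \emph{every} connected link diagram represents a non-split link, with no hypothesis at all, and you yourself flag this as suspicious. The good hypothesis is not decorative here: the diagram I just described is successively $1$-almost positive but \emph{not} good (the two Seifert circles are joined by both a negative and a positive crossing), and it is precisely this configuration that breaks the argument.

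The earlier linking-number discussion does not stand on its own either. You are computing ``linking numbers'' of Seifert circles, but Seifert circles are unions of arcs from possibly several link components, not sublinks; there is no linking number to speak of, and the phrase ``$s$ and $s'$ lie in the same split component of $L$'' has no clear meaning. Even if you rephrase everything in terms of actual link components, the good hypothesis constrains pairs of \emph{Seifert circles}, not pairs of \emph{components}, so there is no direct way to rule out two components meeting in one positive and one negative crossing (linking number zero). You correctly identify this obstacle, but the Seifert-surface escape route is blocked.

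The paper takes a completely different approach: it deduces Theorem~\ref{theorem:split-visible} from Theorem~\ref{theorem:Qhomfibered}. One shows, by induction on the number of positive crossings and using the descending skein tree together with Theorem~\ref{theorem:canonical-surface}, that for a non-split good successively $k$-almost positive diagram $D$ one has $\max\deg_z \nabla_K(z)=1-\chi(K)$; in particular $\nabla_K(z)\neq 0$, which forces $K$ to be non-split. The good hypothesis enters through Theorem~\ref{theorem:canonical-surface} (to identify $\chi(D_0)$ with $\chi(K_0)$ in the inductive step) and through the non-negativity of the Conway polynomial (to prevent cancellation between $\nabla_{K_-}$ and $z\nabla_{K_0}$).
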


\begin{theorem}
\label{theorem:canonical-surface}
Assume that $D$ is a good successively $k$-almost positive diagram. Then its canonical Seifert surface $S_D$ attains the maximum euler characteristic. 
\end{theorem}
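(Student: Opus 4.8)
The plan is to use the Bennequin-type inequality coming from the HOMFLY polynomial (the Morton–Franks–Williams / Bennequin–Vogel framework), in the same spirit as the positive and almost positive cases, but with the writhe replaced by an adjusted quantity that accounts for the $k$ successive negative crossings. Recall that for any diagram $D$ of a link $L$ one has the slice–Bennequin inequality $\chi(S_D) \leq \chi(L)$ is false in general, but the Bennequin inequality $w(D) - s(D) + 1 \leq 2g_4(L) - 1 \leq \ldots$ does control things when $D$ is positive. So first I would recall the precise inequality: for a positive diagram, $\chi(D) = \chi(L)$ follows because the Seifert genus bound from the HOMFLY polynomial (degree bound) is sharp, i.e. $\chi(D) \geq \chi(L)$ always holds trivially and the reverse inequality $\chi(D) \leq \chi(L)$ is what positivity buys. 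My strategy is to reduce the good successively $k$-almost positive case to the positive case by a surface-modification argument.

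The key step: I would take the canonical Seifert surface $S_D$ and locally modify it near the band of $k$ successive negative crossings. Because the diagram is \emph{good}, the two Seifert circles $s, s'$ joined by each negative crossing are joined by \emph{no other} crossing, so the $k$ negative crossings can be handled "in isolation." I would perform an MP-type move (the move alluded to in Figure \ref{fig:MP-move2}) that trades the $k$ negative crossings along the overarc for a local picture that is positive, at the cost of a controlled change in $(s(D), c(D))$ that leaves $\chi$ unchanged — or that changes $\chi$ only in a way matched by a genuine isotopy of the link. Concretely: sliding the overarc carrying the negative crossings, one converts each negative crossing into a positive one while creating/destroying a Seifert circle, and one checks that the net effect on $\chi(D) = s(D) - c(D)$ is zero and the resulting diagram $D'$ is positive, hence $\chi(D') = \chi(L)$ by the positive case; since $\chi(D) = \chi(D')$, we are done. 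Alternatively, if the move does change the link type, I would instead argue via the HOMFLY polynomial directly: bound $\min\deg_z P_L(v,z)$ below using a skein-tree on the $k$ negative crossings (as presumably set up in Section \ref{section:skein}), and compare with the HOMFLY degree bound $\chi(D) \leq \min\deg_z P_L - 1$ — wait, more precisely $1 - \chi(D) \geq \min\deg_v$-type bound; I would use Morton's inequality $e(D) - s(D) + 1 \leq \min\deg_v P \leq \max\deg_v P \leq e(D) + s(D) - 1$ together with the positivity-forced sharpness.

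The main obstacle I expect is ensuring the local modification near the negative band genuinely does not increase the Euler characteristic while producing an \emph{honestly positive} diagram of the \emph{same} link — the "goodness" hypothesis is doing exactly the work of preventing extra crossings between $s$ and $s'$ from obstructing the slide, but one must verify that when several negative crossings sit successively on one overarc, sliding that overarc past the rest of the diagram does not reintroduce negative crossings elsewhere. I would handle this by an induction on $k$: remove one negative crossing at a time by an isotopy supported near the overarc, check the base case $k=1$ (almost positive) reduces to the known result of \cite{pt} or \cite{cr}, and check that the inductive step preserves both goodness and the canonical-surface Euler characteristic. A secondary technical point is that the argument should simultaneously be compatible with Theorem \ref{theorem:split-visible}, so that in the inductive step the diagram remains non-split when the link is non-split; this lets one restrict attention to connected good successively almost positive diagrams throughout.
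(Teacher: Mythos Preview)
Your central idea --- apply a Murasugi--Przytycki move at each of the $k$ negative crossings, using goodness to guarantee each is independent --- is precisely what the paper does. But your execution takes an unnecessary detour and leaves a gap.

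You aim to show that the diagram $D'$ obtained after the moves is \emph{positive}, then invoke the positive case. This claim is not justified in your proposal: an MP-move slides the underarc across a Seifert circle and in general swallows other Seifert circles and crossings, so positivity of $D'$ is not automatic and would need its own argument. The paper avoids this entirely. It tracks only two numbers under the $k$ MP-moves, namely $s(D')=s(D)-k$ and $w(D')=w(D)+k$, and then applies the Bennequin inequality $-s(D')+w(D')\leq -\chi(K)$, which holds for \emph{every} diagram, positive or not. The computation
\[
-s(D)+w(D)+2k = -s(D')+w(D') \leq -\chi(K) \leq -\chi(S_D) = -s(D)+c(D) = -s(D)+w(D)+2k
\]
then forces equality. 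That is the whole proof: no induction on $k$, no reduction to the positive case, no check that $D'$ is positive.

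Two further remarks. Your inequalities are reversed in the opening paragraph: $\chi(S_D)\leq\chi(K)$ is the trivial direction (any Seifert surface has Euler characteristic at most the maximum), and $\chi(S_D)\geq\chi(K)$ is what must be proved. And your HOMFLY alternative would be circular here: in the paper, Theorems~\ref{theorem:Qhomfibered} and~\ref{theorem:HOMFLY} are deduced \emph{from} Theorem~\ref{theorem:canonical-surface}, so you cannot appeal to them in its proof.
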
 

Besides these properties, it turns out that a good successively almost positive link have more nice properties.

Let $\chi(K)$ be the maximal euler characteristic of Seifert surfaces of $K$.
For the Conway polynomial $\nabla_K(z)$ of $K$, $\max \deg_z \nabla_K(z) = 1-\chi(K)$ holds if $K$ is non-split and positive. Similarly, for the HOMFLY polynomial $P_K(v,z)$ of $K$\footnote{Here we use the convention that the skein relation of the HOMFLY polynomial is $v^{-1}P_{K_+}(v,z)-vP_{K_-}(v,z)=zP_{K_0}(v,z)$.} $\max \deg_z P_K(v,z)= \min \deg_v P_K(v,z)=1-\chi(K)$ holds if $K$ is positive \cite{cr}.

We extend these properties for good successively almost positive links.

\begin{theorem}
\label{theorem:Qhomfibered}
$\max \deg_z \nabla_K(z)=1-\chi(K)$ if $K$ is a non-split good successively almost positive link $K$. 
\end{theorem}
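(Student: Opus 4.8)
The plan is to combine Theorem \ref{theorem:canonical-surface} with a lower-bound argument on the $z$-degree of $\nabla_K(z)$. Since $D$ is a good successively $k$-almost positive diagram, Theorem \ref{theorem:canonical-surface} already tells us that $\chi(D)=\chi(K)$, so $1-\chi(K)=1-\chi(D)=c(D)-s(D)+1$. On the other hand, the Conway polynomial always satisfies the Morton--Franks--Williams type bound $\max\deg_z\nabla_K(z)\leq 1-\chi(D)$ for any diagram $D$ (this is the classical bound coming from Seifert's algorithm, or can be read off from the HOMFLY bound $\max\deg_z P_K\leq c(D)-s(D)+1$ together with $\nabla_K(z)=P_K(1,z)$). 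So the inequality $\max\deg_z\nabla_K(z)\leq 1-\chi(K)$ is immediate, and the whole content is the reverse inequality: the top coefficient, i.e. the coefficient of $z^{c(D)-s(D)+1}$, does not vanish.

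To get the lower bound I would run the skein/resolution argument on $D$, smoothing all the crossings along the lines of the standard proof for positive diagrams (as in \cite{cr}). Recall that for a \emph{positive} diagram one resolves each positive crossing via the skein relation $\nabla_{L_+}(z)=\nabla_{L_-}(z)+z\nabla_{L_0}(z)$; changing a positive crossing to negative strictly decreases the number of crossings, so one can induct, and the "all oriented-smoothings" term contributes a monomial of the maximal degree with coefficient $+1$, while every other term has strictly smaller $z$-degree. Here the diagram has $k$ extra negative crossings, all lying on a single overarc, and the goodness hypothesis guarantees that the Seifert circles incident to each negative crossing meet along no other crossing. The key point is that, because the negative crossings are successive along one overarc, smoothing (or switching) them does not disturb the positivity of the rest of the diagram nor the count $c(D)-s(D)$; I would smooth the $k$ negative crossings first — each smoothing either keeps the Seifert-circle count controlled (this is exactly where goodness is used, just as in the proof of Theorem \ref{theorem:canonical-surface}) — reducing to a positive diagram, and then invoke the positive case. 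One must check that the top-degree contributions from the various resolutions of the negative crossings do not cancel; this is where I expect to track signs carefully, using that the negative-crossing skein moves contribute with a consistent sign determined by parity of $k$ and that the lower-order terms stay genuinely lower-order because goodness prevents a jump in Seifert circles.

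The main obstacle, then, is the non-cancellation of the leading coefficient after resolving the $k$ negative crossings: one needs that the maximal-$z$-degree term survives with nonzero coefficient, not merely that each individual resolution has degree at most $1-\chi(D)$. I would handle this exactly as in the canonical-surface proof: the goodness condition ensures that oriented-smoothing a negative crossing merges two Seifert circles (so $s$ drops by one as $c$ drops by one, keeping $c-s$ fixed), whereas the other resolution strictly lowers $c-s$; iterating, the unique "top" term comes from oriented-smoothing \emph{all} crossings, positive and negative alike, and its coefficient is $\pm 1$. Concretely I would set up an induction on $k$: the base case $k=0$ is the positive-diagram statement of \cite{cr}, and the inductive step smooths one negative crossing, producing one good successively $(k-1)$-almost positive diagram (the oriented smoothing) and one diagram of strictly smaller $c-s$ (the other smoothing), so by induction the first contributes the full-degree term and the second cannot cancel it. Combining with the upper bound and $\chi(D)=\chi(K)$ finishes the proof.
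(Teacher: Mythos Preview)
Your overall shape is right—the upper bound $\max\deg_z\nabla_K\leq 1-\chi(D)=1-\chi(K)$ is automatic once you have Theorem~\ref{theorem:canonical-surface}, and the whole content is the non-vanishing of the top coefficient—but your inductive step on $k$ rests on a false claim and leaves the cancellation problem you yourself flag unresolved.

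The factual error: oriented smoothing of \emph{any} crossing leaves the set of Seifert circles unchanged; that is how Seifert circles are defined. So smoothing a negative crossing gives $s(D_0)=s(D)$ and $c(D_0)=c(D)-1$, hence $1-\chi(D_0)=(1-\chi(D))-1$, not $1-\chi(D)$. There is also no ``other smoothing'' in the Conway skein triple; the remaining branch is the crossing change $D_+$, with $1-\chi(D_+)=1-\chi(D)$. Goodness plays no role in this count.

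This feeds directly into the gap. Resolving $D=D_-$ at its first negative crossing gives
\[
\nabla_{K}(z)=\nabla_{K_+}(z)-z\,\nabla_{K_0}(z).
\]
Both $D_+$ and $D_0$ are good successively $(k-1)$-almost positive, so by your induction $\deg\nabla_{K_+}=1-\chi(K)$ and $\deg(z\nabla_{K_0})=1+(1-\chi(K_0))=1-\chi(K)$ as well. The two top terms sit in the \emph{same} degree, each with positive leading coefficient (Theorem~\ref{theorem:conway-non-negative}), and you are \emph{subtracting} them. Nothing in your argument rules out exact cancellation; the claim that one branch has ``strictly smaller $c-s$'' is simply false.

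The paper sidesteps this by resolving at a \emph{positive} crossing instead—the first non-descending crossing for the standard base point of Section~\ref{section:skein}—and inducting on the number $p$ of positive crossings. Then $\nabla_{K}=\nabla_{K_-}+z\,\nabla_{K_0}$ is a \emph{sum} of two non-negative polynomials (each node of the descending tree inherits a positive skein resolution tree, so Theorem~\ref{theorem:conway-non-negative} applies to $K_-$ and $K_0$), and the degree of a sum of non-negative polynomials is the maximum of the degrees. The smoothed diagram $D_0$ is again good successively $k$-almost positive with one fewer positive crossing, so induction gives $\deg(z\nabla_{K_0})=1-\chi(K)$ when $D_0$ is non-split; if $D_0$ is split the resolved crossing was nugatory and one just removes it. It is the non-negativity of $\nabla$, not any Seifert-circle bookkeeping, that kills the cancellation.
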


\begin{theorem}
\label{theorem:HOMFLY}
For a good successively almost positive link $K$, $\max \deg_z P_K(v,z) = \min \deg_v P_K(v,z)=1-\chi(K)$.
\end{theorem}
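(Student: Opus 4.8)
The plan is to derive the HOMFLY polynomial estimate from the Conway/Morton–Franks–Williams style considerations, exactly as in the positive case \cite{cr}, but using the good successively almost positive structure to control the single overarc carrying the negative crossings. First I would recall the general bounds that hold for \emph{any} diagram $D$: the Morton–Franks–Williams inequalities give $\min\deg_v P_K(v,z) \geq w(D) - s(D) + 1$ and $\max\deg_z P_K(v,z) \leq c(D) - s(D) + 1 = 1 - \chi(D)$. Since Theorem \ref{theorem:canonical-surface} already tells us $\chi(D) = \chi(K)$ for a good successively $k$-almost positive diagram, the $z$-degree upper bound is exactly $1-\chi(K)$; so the real content is to produce the matching \emph{lower} bounds, i.e. to exhibit a term of $z$-degree $1-\chi(K)$ and $v$-degree $1-\chi(K)$, and to show $\min\deg_v P_K \geq 1-\chi(K)$.

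The key step is a resolution-tree (skein) argument on the $k$ negative crossings lying along the single overarc, in the spirit of Section \ref{section:skein} of the paper. I would smooth or switch these $k$ crossings one at a time. Switching a negative crossing to a positive one, via $v^{-1}P_{K_+} - vP_{K_-} = zP_{K_0}$, rewrites $P$ of the given diagram in terms of $P$ of a diagram with one fewer negative crossing (an overall $v^2$ times that term) plus $vz$ times $P$ of the oriented smoothing $K_0$. The crucial combinatorial point, and this is where ``good'' is used: because the negative crossings sit successively on one overarc, and because of the good condition (no parallel crossing between the two Seifert circles joined by a negative crossing), each oriented smoothing $K_0$ either reduces $\chi$ appropriately or produces a diagram that is again (good) successively almost positive with strictly fewer negative crossings, so the recursion terminates at genuinely positive diagrams. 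One tracks that every leaf of the tree contributes a monomial whose $(v,z)$-bidegrees are bounded below by $1-\chi(K)$, and that the extremal leaf — the fully positive resolution whose canonical surface still realizes $\chi(K)$ — contributes a nonzero monomial of bidegree exactly $(1-\chi(K),\,1-\chi(K))$ that is not cancelled. For the base case one invokes the positive-link result of \cite{cr} that $\max\deg_z P = \min\deg_v P = 1-\chi$.

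I expect the main obstacle to be the non-cancellation argument: one must be sure that the extremal monomial produced by the all-positive leaf is not killed by contributions coming from the smoothing branches. In the positive case this is automatic because there are no smoothing branches; here one needs a bidegree inequality showing every $z$-smoothing branch contributes only in strictly smaller $v$-degree (or strictly smaller $z$-degree) than the extremal term, using that smoothing a crossing drops $c(D)$ by one while $s(D)$ changes by at most one, combined with the fact established in the proof of Theorem \ref{theorem:canonical-surface} that the good condition forces the relevant Seifert-graph edges to behave like bridges. A secondary point is bookkeeping the writhe: switching negative to positive changes $w(D)$ by $2$ and is exactly compensated by the $v^{2}$ factor, so the lower bound $\min\deg_v P \geq w(D)-s(D)+1$ propagates correctly through the tree down to the positive leaves where it equals $1-\chi(K)$. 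Once the extremal term survives, combining it with the universal upper bounds and Theorem \ref{theorem:canonical-surface} closes both equalities simultaneously.
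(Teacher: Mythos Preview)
Your skein-tree plan contains a sign error that breaks the $\min\deg_v$ argument. Solving the skein relation $v^{-1}P_{K_+}-vP_{K_-}=zP_{K_0}$ for $P_{K_-}$ gives
\[
P_{K_-}=v^{-2}P_{K_+}-v^{-1}zP_{K_0},
\]
so the prefactors are $v^{-2}$ and $-v^{-1}z$, not $v^{+2}$ and $+vz$ as you wrote. When you expand $P_K$ along the tree, the all-positive leaf $K'$ therefore contributes $v^{-2k}P_{K'}$, whose lowest $v$-degree is $(1-\chi(K))-2k$, not $1-\chi(K)$. The ``compensation'' you describe between the $v^{\pm 2}$ factor and the change in $w(D)$ runs the wrong way; at best it recovers the Morton--Franks--Williams bound $\min\deg_v P_K\ge w(D)-s(D)+1=1-\chi(K)-2k$, which is too weak by exactly $2k$. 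To rescue your approach you would need to prove that every term of $v$-degree below $1-\chi(K)$ \emph{cancels} in the tree expansion, which is the opposite of the non-cancellation you discuss and appears as hard as the theorem itself.

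The paper circumvents this entirely. The lower bound $\min\deg_v P_K\ge 1-\chi(K)$ comes from Theorem~\ref{theorem:SQP}: the MP-moves on the good diagram yield a strongly quasipositive braid representative with $\overline{sl}(K)=-\chi(K)$, and then the self-linking inequality $\min\deg_v P_K\ge\overline{sl}(K)+1$ (i.e.\ MFW applied to that braid, not to $D$) gives the right bound. The upper bound $\min\deg_v P_K\le\max\deg_z P_K$ is obtained from the normalization $P_K(v,-v^{-1}+v)=1$, which you do not mention. For $\max\deg_z$, the paper gets the lower bound from Theorem~\ref{theorem:Qhomfibered} via $\deg_z\nabla_K(z)=\deg_z P_K(1,z)\le\max\deg_z P_K$, avoiding any non-cancellation analysis; the upper bound is a short induction on $k$ using the skein triple at the first negative crossing (both $D_+$ and $D_0$ are again good successively $(k-1)$-almost positive), essentially the same as your MFW/induction step. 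In short, your upper bound for $\max\deg_z$ is fine, but the essential missing ingredient is strong quasipositivity for the $v$-degree.
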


When $K$ is an almost positive link which admits a non-good (successively) $1$-almost positive diagram, the equality $\min \deg_v P_K(v,z)=1-\chi(K)$ was proven in \cite{st} (see Corollary \ref{cor:type-II} for a different proof). Therefore Theorem \ref{theorem:HOMFLY} positively answers \cite[Question 2]{st};

\begin{corollary}
\label{cor:HOMFLY-almost-positive}
$\min \deg_v P_K(v,z)=1-\chi(K)$ if $K$ is almost positive.
\end{corollary}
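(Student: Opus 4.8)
The plan is to deduce Corollary~\ref{cor:HOMFLY-almost-positive} from Theorem~\ref{theorem:HOMFLY} by showing that every almost positive link admits a \emph{good} successively $1$-almost positive diagram, or at least a good successively $k$-almost positive diagram for some $k$. Since an almost positive link by definition has a $1$-almost positive diagram $D$, and a $1$-almost positive diagram is the same thing as a successively $1$-almost positive diagram, the only issue is goodness: the single negative crossing $c$ might connect two Seifert circles $s,s'$ that are also joined by one or more positive crossings. So first I would isolate the obstruction — a negative crossing running ``in parallel'' with positive crossings between the same pair of Seifert circles — and then describe a diagrammatic move that removes it while staying within the class of (good) successively almost positive diagrams.

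The key step is a local surgery on the diagram near such a ``bad'' pair $(s,s')$. If the negative crossing $c$ and $p\ge 1$ positive crossings all connect $s$ and $s'$, then the portion of $s\cup s'$ together with these $p+1$ crossings forms an annular region; I would try to either (i) cancel $c$ against one of the parallel positive crossings by a Reidemeister~II move when the strands run antiparallel through that band, reducing to a genuinely positive diagram, or (ii) if the strands are parallel so that no RII cancellation is available, perform a flype or a ``finger move'' that pushes the negative crossing off the band so that it connects a \emph{different} pair of Seifert circles, or merges $s$ and $s'$ by smoothing a positive crossing and re-Seifert-circling. One must check that after the move the diagram is still $1$-almost positive (the number and sign of crossings is controlled: RII removes a $\pm$ pair, a flype changes nothing, a smoothing-and-reconnection may change the Seifert circle count but not crossing signs) and that it remains successively almost positive in the sense of the definition (the negative crossings still sit along a single overarc). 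Iterating over all bad pairs, which are finite in number and whose count I would set up as a decreasing complexity measure (e.g.\ total number of parallel positive crossings sharing a Seifert circle pair with a negative crossing, with $c(D)$ as a tie-breaker), terminates in a good successively almost positive diagram of the same link. Then Theorem~\ref{theorem:HOMFLY} applies verbatim and gives $\min\deg_v P_K(v,z)=1-\chi(K)$ (and $\max\deg_z P_K=1-\chi(K)$).

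The main obstacle I anticipate is case~(ii): when the two strands through the parallel band are coherently oriented, there is no Reidemeister~II cancellation, and one must genuinely argue that the negative crossing can be relocated without introducing new negative crossings or destroying the ``successive along a single overarc'' structure. Here the overarc condition is delicate — moving $c$ might force it to cross under strands it previously went over, turning positive crossings negative. I would handle this by choosing the overarc carefully: since $c$ is the unique negative crossing, its overstrand can be taken to be an arc that we are free to isotope ``over the top'' of the rest of the diagram (a standard trick: one can always arrange a single distinguished arc to lie above everything else in the plane diagram), and then sliding $c$ along this over-everything arc changes nothing about the signs of the other crossings. The bookkeeping that this slide genuinely reduces the complexity measure — rather than trading one bad pair for another — is the point that needs the most care, and I would verify it by tracking how the Seifert circle decomposition changes under the local move, using that smoothing a positive crossing between $s$ and $s'$ either merges them or splits one of them, in both cases strictly reducing the number of crossings lying in bad parallel bands.

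Alternatively, if this diagrammatic argument proves stubborn, a cleaner route is to invoke the proof of Theorem~\ref{theorem:HOMFLY} directly: inspect which features of a good diagram that proof actually uses for the bound $\min\deg_v P_K\ge 1-\chi(K)$ (the matching upper bound $\min\deg_v P_K\le 1-\chi(K)$ holds for \emph{any} diagram via the HOMFLY skein inequalities and is not the issue), and check that a general $1$-almost positive diagram already supplies them after the preliminary normalization that $\chi(D)=\chi(K)$, which for $1$-almost positive links is known (e.g.\ via the Bennequin-type inequality, cf.\ the proof of Corollary~\ref{cor:type-II}). In either formulation the corollary is a direct consequence once goodness — or its substitute — is established for almost positive diagrams.
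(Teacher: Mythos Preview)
Your approach has a genuine gap. The paper does \emph{not} prove that every almost positive link admits a good successively almost positive diagram; in fact, whether the inclusion
\[
\{\mbox{Almost positive of type I}\} \subset \{\mbox{Almost positive}\}
\]
is strict is listed as an open question at the end of the paper (question (a), originating from \cite[Question~3]{st}). Your main plan is essentially an attempted resolution of this open problem, and the sketch you give does not accomplish it. In particular, when the negative crossing $c$ and a positive crossing $c'$ connect the \emph{same} pair of Seifert circles, the strands are coherently oriented by construction, so your case~(i) (an RII cancellation) is available only when $c$ and $c'$ are adjacent along both circles, which need not hold. Your case~(ii) is a vague ``relocate $c$ over the top'' maneuver, and you do not show that the resulting diagram is still (good) successively almost positive with a strictly smaller complexity --- the overarc trick you invoke produces a descending arc, not a control on the Seifert-circle incidences of the relocated crossing.

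Your fallback (``inspect the proof of Theorem~\ref{theorem:HOMFLY} directly'') also fails: that proof uses $\chi(D)=\chi(K)$ via Theorem~\ref{theorem:canonical-surface}, but for an almost positive diagram $D$ of type~II one has $\chi(K)=\chi(D)+2$ (see Section~\ref{sec:ap-I-vs-II}), so the normalization you assume is exactly what breaks.

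The paper's route is instead a case split. If an almost positive diagram is of type~I, it is by definition good successively $1$-almost positive and Theorem~\ref{theorem:HOMFLY} applies. If it is of type~II, the paper invokes the known equality from \cite{st} (for which Corollary~\ref{cor:type-II}, proved via Legendrian front diagrams and Rutherford's ruling polynomial formula, gives an independent argument: a type~II diagram can be drawn as a front with a nonempty oriented ruling set, forcing $\overline{tb}(K)+1=\min\deg_v P_K=1-\chi(K)$). The corollary is the union of these two cases, not a reduction of one to the other.
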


A \emph{strongly quasipositive link} is another generalization of positive links. 
An $n$-braid is \emph{strongly quasipositive} if it is a product of positive band generators $a_{i,j} = (\sigma_i\sigma_{i+1} \cdots \sigma_{j-2})\sigma_{j-1}(\sigma_i\sigma_{i+1} \cdots \sigma_{j-2})^{-1}$ $(1\leq i<j \leq n)$, where $\sigma_{i}$ is the standard generator of the braid group $B_n$. We say that a link is \emph{strongly quasipositive} if it is represented as the closure of a strongly quasipositive braid. 

There is an equivalent definition using Seifert surfaces; A Seifert surface is \emph{quasipositive} if $S$ is realized as an incompressible subsurface of the fiber surface of a positive torus link. A link is strongly quasipositive if and only if it bounds a quasipositive Seifert surface \cite{ru1}.

In \cite{fll}, Feller-Lewark-Lobb proved that almost positive links are strongly quasipositive. At this moment we do not know whether a $k$-almost positive link is strongly quasipositive or not. However, for a good successively $k$-almost positive link, we see that they are strongly quasipositive.

\begin{theorem}
\label{theorem:SQP}
A good successively almost positive link is strongly quasipositive. Indeed, a canonical Seifert surface $S_D$ of a good successively almost positive diagram $D$ is quasipositive. 
\end{theorem}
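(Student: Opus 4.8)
The plan is to show directly that the canonical Seifert surface $S_D$ of a good successively $k$-almost positive diagram $D$ is a quasipositive surface, which by Rudolph's characterization \cite{ru1} immediately gives strong quasipositivity of the link. Recall that $S_D$ is built from disks (one per Seifert circle) joined by twisted bands (one per crossing, positive or negative). For a \emph{positive} diagram every band is a positively-twisted band and the standard Bennequin-surface picture exhibits $S_D$ as an incompressible subsurface of a positive torus link's fiber surface, so the issue is entirely localized at the $k$ negative crossings. By the definition of successively $k$-almost positive, these $k$ negative crossings sit consecutively along a single overarc $\alpha$; by the \emph{goodness} hypothesis (Definition \ref{definition:good}), whenever two distinct Seifert circles $s,s'$ are joined by a negative crossing, that is the \emph{only} crossing joining $s$ and $s'$. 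This is precisely the combinatorial control one needs to ``cancel'' each negative band against a parallel positive band situation or, more robustly, to realize $S_D$ on a braided surface whose bands are all positive.

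The key steps, in order, would be: (1) Put $D$ into braid form compatibly with its Seifert structure using the Yamada--Vogel algorithm, tracking that Seifert's algorithm on the resulting braid closure recovers $S_D$ up to isotopy; the goodness condition is preserved because Yamada--Vogel moves only merge/split Seifert circles in a way that does not create new crossings between a ``negatively-joined'' pair. (2) In the braid picture the negative crossings correspond to negative-band generators, and each such negative band runs between two strands (Seifert disks) that, by goodness, are otherwise not connected by any band; hence one can slide the negative band along the overarc $\alpha$ and, using the successiveness, collect all $k$ negative bands into a single local tangle. (3) Show that this local negative tangle, sitting on two otherwise-unlinked-by-bands disks, can be replaced — via an ambient isotopy of the surface, i.e. a handle slide / Markov-type stabilization on the band presentation — by a configuration of \emph{positive} band generators $a_{i,j}$; the mechanism is the familiar ``$\sigma_i^{-1}$ absorbed into a destabilization'' trick, made legitimate here exactly because no other band obstructs the slide. (4) Conclude that $S_D$ is isotopic to a surface all of whose bands are positive band generators, i.e. a quasipositive Seifert surface, and invoke \cite{ru1}.

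Alternatively, and perhaps more cleanly, one can argue without braiding: use Theorem \ref{theorem:canonical-surface} (goodness implies $\chi(S_D)=\chi(K)$, so $S_D$ is a minimal-genus surface) together with the Bennequin-type inequality to pin down the self-linking number of the transverse link naturally associated to $D$, and then apply Rudolph's criterion that a Seifert surface with $\chi(S)$ equal to the Bennequin bound coming from a quasipositive-type diagram is quasipositive. Here the goodness condition is what upgrades the ``almost positive'' Bennequin surface to an honest quasipositive one: deleting the $k$ negative bands gives a genuinely positive (hence quasipositive, hence fiber-subsurface) surface, and re-attaching each negative band along an arc that meets no other band is exactly the operation of passing to an incompressible subsurface of a larger positive braided surface.

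The main obstacle I anticipate is step (3): verifying that the local negative-band tangle — which a priori could be a nontrivial $k$-strand pattern — really does convert to positive band generators using \emph{only} moves that stay within the isotopy class of the surface (as opposed to moves that change the link). The successiveness along a single overarc should force this local tangle to be a very simple iterated form (essentially a chain $\sigma^{-1}\cdots\sigma^{-1}$ along $\alpha$), and goodness guarantees the ambient room to perform the destabilizations; but making this rigorous — in particular ruling out that some negative band is ``trapped'' by the global structure of $D$ despite the local goodness condition — is where the real work lies, and it is likely where the hypothesis that \emph{all} $k$ negative crossings lie on one overarc (not merely that the diagram is good) gets used in full force.
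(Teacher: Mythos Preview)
Your proposal identifies the right strategy and, to your credit, correctly isolates the genuine difficulty in your own step (3). But you do not actually have a mechanism for that step, and the mechanisms you float (Yamada--Vogel braiding followed by an unspecified ``destabilization trick'', or deleting and re-attaching bands) are not obviously sound. In particular, there is no reason the Yamada--Vogel algorithm preserves the goodness condition---it introduces new Seifert circles and new crossings---so the claim in your step (1) is unsupported; and in your alternative approach, re-attaching a band to a quasipositive surface does not produce an incompressible subsurface of anything, so that line does not give quasipositivity either.

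The paper resolves exactly your step (3) with a single named tool: the \emph{Murasugi--Przytycki move} \cite{mp}. The goodness hypothesis says each negative crossing is \emph{independent} (it is the unique crossing between its two Seifert circles), which is precisely the hypothesis under which the MP-move applies. One MP-move at a negative crossing $c$ slides the underarc of $c$ across one of its Seifert circles; this is an ambient isotopy of the canonical Seifert surface that merges two Seifert disks and eliminates the negative band, leaving a disk-and-twisted-band decomposition of the \emph{same} surface. After $k$ such moves (and here the successiveness along one overarc is what makes the $k$ moves available in sequence, cf.\ Figure~\ref{fig:MP-move2}) one has $s(D')=s(D)-k$, $w(D')=w(D)+k$, and every remaining band is positively twisted. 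The Bennequin inequality then forces $\chi(S_D)=\chi(K)$ (this is how Theorem~\ref{theorem:canonical-surface} is proved simultaneously), and the resulting positive disk-and-band presentation is converted to a strongly quasipositive braid via the quasi-canonical surface machinery of \cite{hik}. So the missing idea in your proposal is precisely this local move; once you have it, there is no global obstruction of the sort you worried about, and no need to braid first.
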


Let $s(K),\tau(K)$ be the Rasmussen invariant \cite{ra} and Heeggard Floer tau-invariant \cite{os} of a knot\footnote{A similar result holds for the link case by using appropriate generalizations of the Rasmussen and the tau invariants for links.} $K$, and let $g_4(K)$ be the slice genus of $K$. Since strongly quasipositive knot $K$ satisfies $s(K)=2\tau(K)=2g_4(K)=2g_3(K)$ \cite[Theorem 4]{li}, \cite[Proposition 1.7]{sh}, we conclude the following.

\begin{corollary}
If $K$ is a good successively almost positive knot, then $s(K)=2\tau(K)=2g_{4}(K)=2g(K)$.
\end{corollary}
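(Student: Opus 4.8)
The plan is to read this off directly from Theorem~\ref{theorem:SQP} together with the cited structural results on strongly quasipositive knots. By Theorem~\ref{theorem:SQP} a good successively almost positive knot $K$ is strongly quasipositive; in fact the canonical Seifert surface $S_D$ of a good successively almost positive diagram $D$ of $K$ is already a quasipositive Seifert surface, and by Theorem~\ref{theorem:canonical-surface} it realizes the Seifert genus $g(K)$.

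Next recall the bounds valid for an arbitrary knot $K$: $\tfrac12 s(K)\le g_4(K)\le g(K)$ from the Rasmussen inequality, and $\tau(K)\le g_4(K)\le g(K)$ from the Ozsv\'ath--Szab\'o inequality. For a strongly quasipositive knot the slice--Bennequin inequality forces the outer terms to coincide, $g_4(K)=g_3(K)=g(K)$, and moreover pins $s(K)$ and $\tau(K)$ to their maximal possible values $s(K)=2g(K)$ and $\tau(K)=g(K)$; this is the content of \cite[Theorem~4]{li} and \cite[Proposition~1.7]{sh}. Applying this to our $K$ gives $s(K)=2\tau(K)=2g_4(K)=2g(K)$, as claimed.

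The step carrying all the weight is Theorem~\ref{theorem:SQP} (and, behind it, Theorem~\ref{theorem:canonical-surface}); once strong quasipositivity is available there is no genuine obstacle, the remaining argument being a verbatim quotation of known facts. The only point needing a separate remark is the link-theoretic version indicated in the footnote: there one replaces $s$ and $\tau$ by suitable link generalizations and uses that a quasipositive Seifert surface also has maximal euler characteristic in $B^4$, so that the corresponding Bennequin-type inequalities remain sharp; the conclusion then follows in the same manner.
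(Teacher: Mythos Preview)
Your argument is correct and matches the paper's own reasoning exactly: the corollary is deduced immediately from Theorem~\ref{theorem:SQP} together with the cited facts \cite[Theorem~4]{li} and \cite[Proposition~1.7]{sh} that a strongly quasipositive knot satisfies $s(K)=2\tau(K)=2g_4(K)=2g(K)$. The extra explanatory layer you add (the Rasmussen and Ozsv\'ath--Szab\'o inequalities, the slice--Bennequin input, the remark on the link case) only unpacks those citations and does not depart from the paper's approach.
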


\subsection{Signature and concordance finiteness}

It is known that the signature of a non-trivial (almost) positive link is always strictly negative; $\sigma(K) < 0$ \cite{pt}. Indeed, for a positive diagram $D$, a stronger inequality 
\begin{equation}
\label{eqn:bdl} \sigma(K) \leq \frac{1}{24}(\chi(D)-1) \; \; \left( = \frac{1}{24}(\chi(K)-1) \right)
\end{equation}
was proven in \cite[Theorem 1.2]{bdl}. Since positive-to-negative crossing change increases the signature by at most two, (\ref{eqn:bdl}) implies 
\begin{equation}
\label{eqn:bdl2} \sigma(K) \leq \frac{1}{24}(\chi(D)-1)+2c_-(D)
\end{equation} for general link diagram $D$.

In this opportunity, we present an improvement of (\ref{eqn:bdl2}).

\begin{theorem}
\label{theorem:signature-improved}
Let $D$ be a reduced diagram of a non-trivial link $K$. Then 
\[ \sigma(K) \leq \frac{1}{12}(\chi(D)-1)+\frac{4}{3}c_-(D) -\frac{1}{2} \leq \frac{1}{12}(\chi(K)-1)+\frac{4}{3}c_-(D)-\frac{1}{2}\]
\end{theorem}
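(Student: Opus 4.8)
The plan is to prove the first inequality by induction on the number $c_-(D)$ of negative crossings; the second inequality is then automatic, since the canonical surface satisfies $\chi(D)=\chi(S_D)\le\chi(K)$ and the right-hand side increases with $\chi(D)$.

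For the inductive step, assume $c_-(D)\ge 1$, pick a negative crossing $c$, and let $D_0$ be the oriented (Seifert) smoothing of $D$ at $c$. Then $c(D_0)=c(D)-1$ and $c_-(D_0)=c_-(D)-1$, the number of Seifert circles changes by $\pm1$ so $\chi(D_0)\in\{\chi(D),\,\chi(D)+2\}$, and the standard skein inequality for the signature (Murasugi) gives $\sigma(K)\le\sigma(K_0)+1$ for the link $K_0$ of $D_0$ (with the customary nullity bookkeeping). Before invoking the induction hypothesis one restores the hypotheses of the theorem: delete any nugatory crossings of $D_0$ --- this leaves $K$, $\chi$, and $\sigma$ unchanged and does not increase $c_-$ --- and if $D_0$ is split or its link has a trivial split component, treat each non-trivial piece separately, using that $\sigma$, $\chi$, and $c_-$ are additive over split unions. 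Substituting the induction hypothesis (one application per non-trivial piece) into $\sigma(K)\le\sigma(K_0)+1$ then preserves the stated bound: since the coefficient of $c_-$ in the target is $\tfrac43$, every smoothing carries a surplus of $\tfrac43-1=\tfrac13$, which absorbs the $+1$ in the signature and the possible $+2$ in $\chi$ (a loss of only $\tfrac16$ against the target); the splitting and trivial-component accounting, as one checks, only improves the margin.

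For the base case $c_-(D)=0$, the diagram $D$ is a reduced positive diagram of the non-trivial link $K$, and one must show $\sigma(K)\le\tfrac1{12}(\chi(D)-1)-\tfrac12$. I expect this to be the genuine obstacle. The target has coefficient $\tfrac1{12}$ on $\chi(D)-1$, which is sharper by a factor $2$ than the $\tfrac1{24}$ in (\ref{eqn:bdl}) as stated, so one cannot merely quote (\ref{eqn:bdl}): what is needed is the positive-diagram signature estimate in its sharp form (as in \cite{bdl}, or via a direct argument on the Seifert graph of $D$), combined with the fact that a non-trivial positive link has strictly negative signature (\cite{pt}) and with the integrality of $\sigma$, the latter being precisely what upgrades such an estimate to one carrying the additive margin $-\tfrac12$. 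The same parity of the signature also closes the few exceptional small configurations of the inductive step --- for instance when the last negative crossing smooths to an unknot --- and the $-\tfrac12$ in the statement is calibrated exactly so that these cases, and the induction, go through.
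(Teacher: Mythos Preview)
Your inductive scheme is not the paper's approach, and more importantly it has a genuine gap at the base case. The paper does \emph{not} reduce to positive diagrams; it proves the bound for arbitrary reduced $D$ in one shot via the Gordon--Litherland formula $2\sigma(K)=\sigma(B)+\sigma(W)-c_+(D)+c_-(D)$ for the two checkerboard surfaces, bounds each of $\sigma(B),\sigma(W)$ by producing a large non-positive subspace (using the four-colour theorem on the graph of regions with $\langle\gamma_R,\gamma_R\rangle\le 0$), and then controls the number of ``bad'' regions $\gamma^W_{>0},\gamma^B_{>0}$ by a counting argument that trades type-$(2,0)$ bigons for Seifert circles. The constants $\tfrac1{12}$, $\tfrac43$, and $-\tfrac12$ all emerge from this count; nothing is inducted on $c_-(D)$.

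In your proposal the entire content has been pushed into the base case $c_-(D)=0$, where you need $\sigma(K)\le\tfrac1{12}(\chi(D)-1)-\tfrac12$ for reduced positive $D$. That is exactly Corollary~\ref{cor:sig-vs-euler}, which in the paper is a \emph{consequence} of the theorem you are trying to prove, not an input. You cannot quote \cite{bdl}: that paper gives only the coefficient $\tfrac1{24}$, and the halving to $\tfrac1{12}$ is precisely the improvement being claimed here. Nor does the device ``strict negativity $+$ integrality'' upgrade a $\tfrac1{24}$- or even a $\tfrac1{12}$-bound to one with an extra $-\tfrac12$: if $\tfrac1{12}(\chi(D)-1)$ happens to be a negative integer (e.g.\ $\chi(D)=-11$), integrality buys nothing. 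So the base case is simply not proved.

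Two smaller points. First, your claim that the oriented smoothing changes $s(D)$ by $\pm1$ is wrong: Seifert's algorithm already smooths every crossing, so $s(D_0)=s(D)$ and $\chi(D_0)=\chi(D)+1$, not $\chi(D)$ or $\chi(D)+2$. (The inductive arithmetic still closes with the correct value, with a margin of $\tfrac14$, but your ``surplus $\tfrac13$'' bookkeeping is based on the wrong numbers.) Second, the restoration of hypotheses after smoothing --- removing nugatory crossings, splitting off trivial components, treating split pieces separately --- would require real work: each operation perturbs $\chi$, $c_-$, and the additive constant $-\tfrac12$ in its own way, and you have not checked that the inequality survives all of them. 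Even if that were done, the argument would remain circular until the positive case is established independently.
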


This inequality is interesting and useful in its own right. For positive link case, this gives the following improvement of (\ref{eqn:bdl}).

\begin{corollary}
\label{cor:sig-vs-euler}
If $K$ is a non-trivial positive link, 
\[ \sigma(K) \leq \frac{1}{12}(\chi(K)-1)-\frac{1}{2}. \]
\end{corollary}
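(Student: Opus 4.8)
The plan is to derive this immediately from Theorem \ref{theorem:signature-improved} by applying it to a suitably chosen diagram of $K$. Since $K$ is positive, it is represented by some positive diagram $D_0$, i.e.\ one with $c_-(D_0)=0$. The only thing to arrange is that the diagram be reduced, as required by Theorem \ref{theorem:signature-improved}: if $D_0$ has a nugatory crossing, I would undo it with a Reidemeister~I move, which deletes one crossing and leaves the signs of all remaining crossings unchanged, hence produces another positive diagram of $K$ with strictly fewer crossings. Iterating finitely many times yields a reduced positive diagram $D$ of $K$, still with $c_-(D)=0$, and since $D$ represents the non-trivial link $K$, Theorem \ref{theorem:signature-improved} applies to it.

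Then I would simply substitute $c_-(D)=0$ into the second inequality of Theorem \ref{theorem:signature-improved}:
\[
\sigma(K)\ \le\ \frac{1}{12}(\chi(K)-1)+\frac{4}{3}c_-(D)-\frac{1}{2}\ =\ \frac{1}{12}(\chi(K)-1)-\frac{1}{2},
\]
which is exactly the asserted bound. (Equivalently, one may use the first inequality of Theorem \ref{theorem:signature-improved} together with the fact recalled in the introduction that a positive diagram satisfies $\chi(D)=\chi(K)$; both routes give the same estimate.)

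There is essentially no obstacle here beyond what is already contained in Theorem \ref{theorem:signature-improved}: the sole point needing comment is that a positive link admits a reduced positive diagram, which is handled by the Reidemeister~I reduction above, since removing crossings never introduces negative ones. Thus the corollary is a one-line consequence once the theorem is available; its content is the comparison of the constants $\tfrac{1}{12}$ and $-\tfrac{1}{2}$ appearing here with the constant $\tfrac{1}{24}$ in the earlier bound~(\ref{eqn:bdl}).
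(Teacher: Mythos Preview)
Your argument is correct and is exactly the intended derivation: the paper states Corollary~\ref{cor:sig-vs-euler} without proof as an immediate specialization of Theorem~\ref{theorem:signature-improved} to a reduced positive diagram (so $c_-(D)=0$ and $\chi(D)=\chi(K)$). Your only added detail---that nugatory crossings can be removed by Reidemeister~I while preserving positivity---is the obvious reduction the paper leaves implicit.
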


In \cite[Theorem 1.1]{bdl}, as an application of signature estimate they showed that every topological knot concordance class contains finitely many positive knots. Theorem \ref{theorem:signature-improved} leads to a similar finiteness result for successively $k$-almost positive knots under an additional restriction.
Let $g_c^{top}(\mathcal{K})= \min \{g(K) \: | \: K \in \mathcal{K}\}$ be the \emph{topological concordance genus} of a topological concordance class $\mathcal{K}$.
 
\begin{theorem}
\label{cor:finite-concordance}
For any $0<d<\frac{1}{8}$, every topological knot concordance class $\mathcal{K}$ contains only finitely many successively $k$-almost positive knots such that $k \leq d g^{top}_c(\mathcal{K})$.
\end{theorem}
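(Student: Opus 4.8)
The plan is to combine the signature inequality of Theorem~\ref{theorem:signature-improved} with the concordance-invariance of the signature, exactly in the spirit of \cite[Theorem 1.1]{bdl}. Fix a topological concordance class $\mathcal{K}$ and set $g_0 = g^{top}_c(\mathcal{K})$; since $\sigma$ is a concordance invariant, $\sigma(K) = \sigma(\mathcal{K})$ is the same integer for every $K \in \mathcal{K}$. If $K$ is a successively $k$-almost positive knot in $\mathcal{K}$ represented by a successively $k$-almost positive diagram $D$, I would first arrange that $D$ is reduced: removing nugatory crossings does not change the knot, and a crossing in a reducible diagram can be taken to be positive (or, if it is one of the $k$ negative crossings, removing it only decreases $k$ and still leaves a successively almost positive diagram), so after reduction we still have a reduced successively $k'$-almost positive diagram with $k' \le k$ and with $c_-(D) = k'$. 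Applying Theorem~\ref{theorem:signature-improved} to this reduced diagram gives
\[
\sigma(\mathcal{K}) \;\le\; \frac{1}{12}\bigl(\chi(K)-1\bigr) + \frac{4}{3}k' - \frac{1}{2} \;\le\; \frac{1}{12}\bigl(\chi(K)-1\bigr) + \frac{4}{3}k - \frac{1}{2},
\]
where I used $\chi(D) \le \chi(K)$ (the canonical surface has euler characteristic at most the maximal one) and $k' \le k$.

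Next I would convert this into a bound on the genus $g(K)$. Writing $\chi(K) = 1 - 2g(K)$ for a knot, the displayed inequality becomes
\[
\sigma(\mathcal{K}) \;\le\; -\frac{1}{6}\,g(K) + \frac{4}{3}k - \frac{1}{2},
\qquad\text{i.e.}\qquad
g(K) \;\le\; 8k - 6\sigma(\mathcal{K}) - 3.
\]
Now impose the hypothesis $k \le d\,g_0$ with $0 < d < \tfrac18$. Since $g(K) \ge g_4(K) \ge g_0$ — wait, more carefully: the concordance genus $g_0$ is the \emph{minimum} of $g$ over the class, so $g(K) \ge g_0$ for every $K \in \mathcal{K}$. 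Substituting $k \le d g_0 \le d\,g(K)$ into the bound gives $g(K) \le 8 d\, g(K) - 6\sigma(\mathcal{K}) - 3$, hence $(1 - 8d)\,g(K) \le -6\sigma(\mathcal{K}) - 3$, and since $1 - 8d > 0$ we obtain
\[
g(K) \;\le\; \frac{-6\sigma(\mathcal{K}) - 3}{1 - 8d}.
\]
Thus $g(K)$ is bounded above by a constant depending only on $\mathcal{K}$ and $d$, and $k \le d\,g_0 \le d\,g(K)$ is bounded as well. Since there are only finitely many knots of bounded genus admitting a diagram with a bounded number of crossings (the crossing number of a successively $k$-almost positive knot of genus $g$ is controlled: $c(D) = s(D) - \chi(D) = s(D) - 1 + 2g(D)$, and the number of Seifert circles of a reduced successively almost positive diagram is itself bounded in terms of $g$ and $k$, as in the positive case), there can be only finitely many such knots in $\mathcal{K}$.

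The main obstacle I anticipate is the last step: deducing that ``bounded genus plus successively $k$-almost positive with bounded $k$'' yields only finitely many knots. This is the analogue of the finiteness argument in \cite{bdl} and requires bounding the crossing number of a \emph{reduced} successively $k$-almost positive diagram in terms of $\chi(D)$ and $k$. For a reduced positive diagram one shows $s(D)$ is bounded by the genus because each Seifert circle must meet enough crossings; here one must check that the presence of a single overarc carrying $k$ negative crossings does not allow $s(D)$ to be arbitrarily large relative to $\chi(D)$ and $k$ — intuitively it cannot, since deleting the overarc returns a (nearly) positive diagram, but this deserves a careful argument. Once this combinatorial finiteness input is in place, the signature estimate does all the real work and the rest is the elementary arithmetic above; the sign hypothesis $\sigma(\mathcal{K}) \le 0$ is automatic here because $\mathcal{K}$ contains a successively almost positive knot, so Theorem~\ref{theorem:signature} forces $\sigma(\mathcal{K}) \le 0$ and the bound $\dfrac{-6\sigma(\mathcal{K}) - 3}{1-8d}$ is a genuine (finite, and in fact the numerator could be negative only if $\sigma(\mathcal{K})=0$, handled separately but just as easily) constant.
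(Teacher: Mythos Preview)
Your argument is correct up through the bound $g(K)\le\frac{-6\sigma(\mathcal K)-3}{1-8d}$ (and one could just as well bound $g(D)$ by using the $\chi(D)$ form of Theorem~\ref{theorem:signature-improved} together with $k\le d\,g_0\le d\,g(D)$, which is what the paper does). The genuine gap is in the final step. The claim that a reduced successively $k$-almost positive diagram has $s(D)$ bounded in terms of $g(D)$ and $k$---hence bounded crossing number---is false already for $k=0$: there are infinitely many positive knots of genus~$1$ (for instance the knots $5_2,7_2,9_2,\dots$), and their reduced positive diagrams have arbitrarily many Seifert circles. So ``bounded genus $+$ bounded $k$ $+$ successively almost positive'' does \emph{not} by itself force finiteness, and the analogy with ``the positive case'' you invoke does not exist.

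This is exactly why the paper (following \cite{bdl}) does not stop at the genus bound. Instead, having bounded $g(D_i)$, it invokes Stoimenow's theorem \cite{st1} that all diagrams of bounded canonical genus are obtained from finitely many generators by $\bar t_N$-moves (inserting full twists at crossings); an infinite family would then force arbitrarily many positive twists at a fixed crossing $c$ of a fixed generator. The contradiction is obtained not from the ordinary signature but from the Levine--Tristram signature $\sigma_\omega$ at a non-algebraic $\omega$ near~$1$: one shows via the Seifert form that inserting enough positive twists at $c$ strictly decreases $\sigma_\omega$ below $\sigma_\omega(\mathcal K)=0$, while positive-to-negative crossing changes (which relate these twisted knots to members of $\{K_i\}$) can only increase $\sigma_\omega$. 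Your outline is missing this entire second half of the argument; the signature inequality alone does the bookkeeping you wrote down, but it cannot finish the proof.
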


In particular, this extends the concordance finiteness result for almost positive knots, under the assumption that its topological concordance genus is sufficiently large.

\begin{corollary}
\label{cor:finite-almost-positive}
A topological knot concordance class $\mathcal{K}$ contains only finitely many almost positive knots if $g^{top}_c(\mathcal{K})\geq 8$.
\end{corollary}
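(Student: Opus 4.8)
The plan is to derive Corollary~\ref{cor:finite-almost-positive} as the special case $k=1$ of Theorem~\ref{cor:finite-concordance}, by checking that the quantitative hypothesis of that theorem can be met for almost positive knots once the topological concordance genus is at least $8$. Recall that an almost positive knot is exactly a successively $1$-almost positive knot, so it suffices to find some $d$ with $0<d<\tfrac18$ such that $1\leq d\, g_c^{top}(\mathcal K)$; this holds precisely when $g_c^{top}(\mathcal K)>8$, and with a little care at the boundary, when $g_c^{top}(\mathcal K)\geq 8$.

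Concretely, I would argue as follows. Fix a topological concordance class $\mathcal K$ with $g_c^{top}(\mathcal K)\geq 8$, and suppose first $g_c^{top}(\mathcal K)>8$. Choose $d$ with $1/g_c^{top}(\mathcal K) \leq d < \tfrac18$ (possible since $1/g_c^{top}(\mathcal K) < \tfrac18$). Then every almost positive knot $K\in\mathcal K$ is successively $k$-almost positive with $k=1 \leq d\, g_c^{top}(\mathcal K)$, so Theorem~\ref{cor:finite-concordance} applies and there are only finitely many such knots. The remaining case $g_c^{top}(\mathcal K)=8$ needs one extra observation: here $1/g_c^{top}(\mathcal K)=\tfrac18$, so we cannot choose $d<\tfrac18$ with $d\,g_c^{top}(\mathcal K)\geq 1$ directly. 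I would handle this by noting that the proof of Theorem~\ref{cor:finite-concordance} (via the signature bound of Theorem~\ref{theorem:signature-improved}) actually yields a bound of the form "only finitely many successively $k$-almost positive knots of genus $g$ with $k < \tfrac18 g + O(1)$", i.e. the strict inequality $k \leq d g_c^{top}(\mathcal K)$ with $d<\tfrac18$ has slack that absorbs the constant; for a fixed small value such as $k=1$ this slack makes the endpoint $g_c^{top}(\mathcal K)=8$ harmless. Alternatively, and more cleanly, one simply observes that the genus of any almost positive knot in $\mathcal K$ is at least $g_c^{top}(\mathcal K)$, so for all but finitely many genus values the relevant inequality in the proof of Theorem~\ref{cor:finite-concordance} holds strictly, and the finitely many exceptional genera each contribute finitely many knots.

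The main obstacle I expect is precisely this boundary bookkeeping at $g_c^{top}(\mathcal K)=8$: the statement of Theorem~\ref{cor:finite-concordance} is phrased with the open condition $0<d<\tfrac18$ and the non-strict $k\leq d\,g_c^{top}(\mathcal K)$, so naively plugging in forces $g_c^{top}(\mathcal K)>8$ rather than $\geq 8$. Resolving it requires either (i) re-examining the signature inequality in Theorem~\ref{theorem:signature-improved} to see that for $k=1$ it already rules out all sufficiently large genera with room to spare, or (ii) reformulating the argument so that one only needs, for each genus $g\geq g_c^{top}(\mathcal K)=8$ except finitely many, that $1 < \tfrac18 g$ plus lower-order terms — which is automatic once $g>8$, leaving just $g=8$ to be dispatched by the finiteness of almost positive knots of any fixed genus. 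Aside from that endpoint subtlety, the corollary is a direct specialization and the rest of the argument is routine.
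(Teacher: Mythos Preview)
Your approach matches the paper's: Corollary~\ref{cor:finite-almost-positive} is stated as an immediate specialization of Theorem~\ref{cor:finite-concordance} with $k=1$, and the paper offers no separate argument. You are right that a literal reading of Theorem~\ref{cor:finite-concordance} only yields $g_c^{top}(\mathcal K)>8$, i.e.\ $\geq 9$; the paper simply does not comment on this.

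Your first workaround is the correct one and can be said more sharply. For almost positive knots $k_i=1$ is a \emph{constant}, so instead of passing through a parameter $d$ one plugs $c_-(D_i)=1$ directly into Theorem~\ref{theorem:signature-improved} to obtain
\[
\sigma(\mathcal K)=\sigma(K_i)\;\leq\; -\tfrac{1}{6}g(D_i)+\tfrac{4}{3}-\tfrac{1}{2},
\]
hence $g(D_i)\leq 5-6\,\sigma(\mathcal K)$. This bounds the canonical genus uniformly and the remainder of the proof of Theorem~\ref{cor:finite-concordance} (the Stoimenow generator set and the twist/signature limit argument) goes through verbatim. In particular the hypothesis $g_c^{top}(\mathcal K)\geq 8$ is not actually needed in the almost positive case; it is an artifact of routing the corollary through the $d$-parametrized statement.

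Your second workaround should be dropped. The claim that ``the finitely many exceptional genera each contribute finitely many knots'' is not justified: for a fixed (canonical) genus there are, a priori, infinitely many almost positive knots obtained by $\overline{t}_N$-moves, and ruling these out inside a single concordance class is exactly what the Levine--Tristram signature argument in the proof of Theorem~\ref{cor:finite-concordance} does. So this alternative is circular rather than ``cleaner''.
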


\section*{Acknowledgement}
The author would like to thank Kimihiko Motegi and Masakazu Teragaito for stimulating conversations and discussions. A notion of successively positive diagram appeared during a joint work \cite{imt}.
The author has been partially supported by JSPS KAKENHI Grant Number 19K03490,	21H04428.

\section{Proof of properties}

In this section we present a proof of properties of (good) successively almost positive links.

\subsection{Positive skein resolution tree for successively almost positive diagrams}
\label{section:skein}

\begin{definition}[Skein resolution tree]
For a knot diagram $D$, a \emph{skein resolution tree} is a rooted binary tree $T$  such that
\begin{itemize}
\item[(i)] Each node $c$ is labelled by a knot diagram $D_c$.
\item[(ii)] The root is labelled by $D$.
\item[(iii)] At each non-terminal node $c$ with its children $c',c''$, their labellings $(D_c, D_{c'},D_{c''})$ forms a skein triple $(D_+,D_-,D_0)$ or $(D_-,D_+,D_0)$
\item[(iv)] At each terminal node $c$, $D_c$ represents the trivial link.
\end{itemize}

We say that a skein resolution tree $T$ is \emph{positive} if instead of (iii), the stronger property holds.
\begin{itemize}
\item[(iii+)] At each non-terminal node $c$ with its children $c',c''$, their labellings $D_c, D_{c'},D_{c''}$ forms a skein triple $(D_+,D_-,D_0)$.
\end{itemize}

\end{definition}

By the skein relation, one can compute the Conway, the Jones and the HOMFLY polynomial by the skein resolution tree.

There are various ways to construct a skein resolution tree from a diagram. Here we use the one given in \cite[Theorem 2]{cr}.

A \emph{based link diagram} $D$ is a link diagram $D$ such that
\begin{itemize}
\item The ordering (numbering) of component of $D$ is given; $D=D_1 \cup D_2 \cup \cdots \cup D_n$.
\item For each component $D_i$, the base point $\ast_i \in D_i$ is given.
\end{itemize}

By suitably assigning the $z$-coordinates, we view a based link diagram $D$ as a collection of curves $\gamma_i:[0,1] \rightarrow \R^{3}$ such that $\gamma_i(0)=\gamma_i(1)=\ast_i$.

\begin{definition} 
A based link diagram $D$ is \emph{descending} if 
\begin{enumerate}
\item The $z$-coordinate of $\gamma_i$ is monotone decreasing, except near $t=1$.
\item When $j<i$, the $j$-th component of $D$ lies entirely above of the $i$-th component of $D$.
\end{enumerate} 
\end{definition}
Obviously, a descending diagram represent the unlink.

For a given based link diagram $D$, by trying to make $D$ descending, one can construct skein resolution tree as follows (see \cite[Theorem 2]{cr}, for details).

We start at the base point $\ast_1$ of the 1st component $D_1$. We walk along the diagram until we encounter  the \emph{first non-descending crossing} $c$, the crossing where the descending condition fails for the first time. Namely, we encounter the crossing $c$ for the first time but we pass the crossing $c$ as the underarc. When we do not encounter such a crossing, then we turn our attention to the 2nd component $D_2$, and continue to do the same procedure.

We denote by $d(D)$ the number of crossings that appeared before arriving to the first non-descending crossing. If we cannot find the first non-descending crossing, then $D$ is already descending, and in this case we define $d(D)=c(D)$.
 
Assume that the first non-descending crossing $c$ is the crossing between the components $D_i$ and $D_j$ $(i \leq j)$. We apply the skein resolution at the crossing $c$. Let $D'$ be the diagram obtained by the crossing change at $c$, and let $D''$ be the diagram obtained by the smoothing at $c$. 

The diagram $D'$ is naturally regarded as a based link diagram.
We view the diagram $D''$ as a based diagram as follows:
\begin{itemize}
\item When the smoothing merges two different components (i.e. $i \neq j$), we simply forget the base point and merged component $D_j$.
\item When the smoothing splits the component $D_i$ into two components $D^{(1)}_i$ and $D^{(2)}_i$ (i.e. $i = j$), we take the component that contains $\ast_i$ as the $i$th component. We take $\ast_i$ as its base point.
We take the other component $D^{(2)}$ as $D_{i'}$ where $i'$ is taken so that $i<i'$, and we take a base point near the crossing $c$.
\end{itemize}

By induction on $(c(D)-d(D),c(D))$, one can check that this procedure terminates in finitely many steps hence we get a skein resolution tree. We call this \emph{the descending skein resolution tree} of the based diagram $D$ (see Figure \ref{fig:skein-tree} for example).

\begin{figure}[htbp]
\includegraphics*[width=60mm]{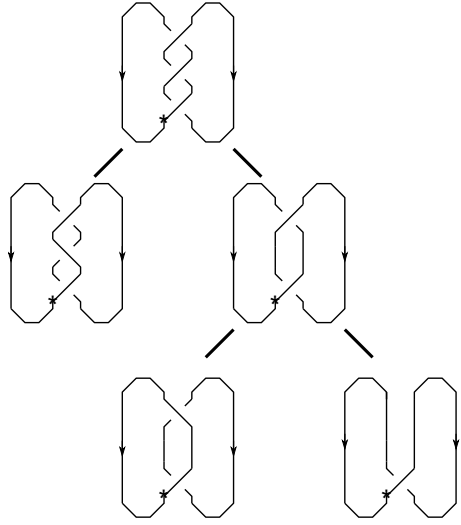}
\begin{picture}(0,0)
\end{picture}
\caption{Descending skein resolution tree of the trefoil knot diagram} 
\label{fig:skein-tree}
\end{figure} 

For a successively $k$-almost positive diagram $D$, we take a base point $\ast$ on the overarc of the first negative crossing poin. We call $\ast$ the \emph{standard base point} of $D$.
We view a successively $k$-almost positive diagram $D$ as a based link diagram by taking the ordering of components so that the component having the standard base point $\ast$ as $D_1$. We take the standard base point as the base point of $D_1$, and the rest of the choices (orderings and the base points of other components) are arbitrary.

\begin{proposition}
\label{prop:positive-resolution}
The descending skein resolution tree of succesively $k$-almost positive diagram $D$ is positive.
\end{proposition}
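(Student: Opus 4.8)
The plan is to pin down the single property of the descending traversal that makes the skein triples positive and then show it propagates through the tree. Say that a based link diagram $E$ has property $(\star)$ if, running the descending procedure on $E$ (walk along $E_1$ from its base point, then along $E_2$, etc.), \emph{every negative crossing of $E$ is met first along its over-strand}. Property $(\star)$ is exactly what we need: by construction the first non-descending crossing $c$ is one whose first visit is as the understrand, so if $E$ has $(\star)$ then $c$ is not a negative crossing, hence $c$ is positive; resolving at $c$ therefore produces the skein triple $(E, E', E'')=(D_+,D_-,D_0)$ (crossing change gives $D_-$, smoothing gives $D_0$), which is condition (iii+). So it suffices to prove that $(\star)$ holds at the root of the descending skein resolution tree of $D$ and is inherited by the two children of every node.

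At the root, the standard based diagram has all $k$ negative crossings lying on the overarc issuing from the standard base point $\ast=\ast_1$; hence the walk from $\ast_1$ passes over all of them before it first goes under anything, so each negative crossing is met first along its over-strand and $(\star)$ holds. Now let $c$ be the first non-descending crossing of a node $E$ having $(\star)$ — positive, by the previous paragraph. For the child $E'$ obtained by the crossing change at $c$, the underlying immersed curve, the component ordering, and all base points are unchanged, so the sequence of crossing-visits and the over/under data are identical to those of $E$ except that at $c$ itself the first visit flips from under to over; hence the old negative crossings still satisfy $(\star)$ in $E'$, and so does the newly created negative crossing $c$. Thus $E'$ has $(\star)$.

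The genuinely delicate case is the child $E''$ obtained by the (oriented) smoothing at $c$. Here the oriented smoothing preserves the orientations, hence the signs, of all crossings other than $c$, and $c$ was positive, so $E''$ has exactly the negative crossings of $E$; the problem is that the smoothing may merge or split components, after which the construction re-indexes components and re-chooses base points, and one must check $(\star)$ survives this reshuffling. The clean device is to carry along a slightly stronger invariant at each node: the over-strand of every negative crossing is traversed \emph{strictly before the first visit to the crossing being resolved}. This holds at the root (the negatives sit on the initial overarc, traversed before the first underpass, whereas $c$ is first met at an underpass), and the two arguments above show it is inherited by $E'$ and by $E''$; granting it, when we smooth at $c$ the over-visits of all negative crossings sit at the very front of the traversal of the re-indexed component that still contains $\ast_1$ — before everything that the smoothing reconnects or moves, and before every component of index $\geq 2$ — so they still precede the corresponding under-visits and $(\star)$ persists. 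An induction on the well-founded quantity $(c(D)-d(D),c(D))$ used to prove termination of the construction then shows that every node of the descending skein resolution tree of $D$ satisfies the invariant, so every skein triple in the tree is $(D_+,D_-,D_0)$, i.e. the tree is positive. The one step requiring real care is the last one — following the re-indexing after a smoothing and confirming the negative crossings stay at the front of the traversal — while everything else is essentially a single observation.
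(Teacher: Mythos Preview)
Your argument is correct and rests on the same observation as the paper: with the standard base point, the walk passes over all negative crossings before it ever reaches a non-descending crossing, so every resolution is at a positive crossing. The paper's own proof is a two-sentence remark to exactly this effect and does not explicitly track an invariant through the tree; your careful verification that the property (in its strengthened form, with the over-visits of all negatives preceding the first non-descending crossing) survives both the crossing-change and the smoothing child simply fills in what the paper leaves implicit.
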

\begin{proof}
By definition of successively $k$-almost positive diagram $D$, the diagram $D$ is descending at the first $k$ negative crossings. Thus in the construction of the descending skein resolution tree, all the skein resolutions happen at the positive crossings.
\end{proof}

Theorem \ref{theorem:signature} and Theorem \ref{theorem:conway-non-negative} follow from Proposition \ref{prop:positive-resolution} and the following simple observations.
\begin{lemma}
If a diagram $D$ admits a positive skein resolution tree, then
\begin{itemize}
\item[(i)] $\nabla_K(z)$ is non-negative.
\item[(ii)] $\sigma_{\omega}(K)\leq 0$
\end{itemize}
\end{lemma}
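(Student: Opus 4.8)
The plan is to prove both items by induction from the leaves of the given positive skein resolution tree $T$ toward its root, showing that for every node $c$ the link $K_c$ represented by $D_c$ satisfies (i) and (ii); taking $c$ to be the root then yields the statement for $K=K_{\mathrm{root}}$. For the base case, a terminal node $c$ carries a diagram of the trivial $m$-component link $U_m$, and there $\nabla_{U_m}(z)$ equals $1$ if $m=1$ and $0$ if $m\ge 2$ (both non-negative), while $\sigma_\omega(U_m)=0$ for every $\omega\in S^{1}$ since $U_m$ has trivial Seifert form.

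For the inductive step of (i), let $c$ be an internal node with children $c',c''$; by the positivity condition (iii+) the triple $(D_c,D_{c'},D_{c''})$ is a skein triple of the shape $(D_+,D_-,D_0)$, so the Conway skein relation gives
\[ \nabla_{K_c}(z)=\nabla_{K_{c'}}(z)+z\,\nabla_{K_{c''}}(z). \]
Non-negativity of a polynomial in $z$ is preserved under addition and under multiplication by $z$, so the inductive hypothesis applied to $c'$ and $c''$ shows that $\nabla_{K_c}$ is non-negative.

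For the inductive step of (ii), the one input needed is the classical fact that for any skein triple, $\sigma_\omega(D_+)\le\sigma_\omega(D_-)$ for every $\omega\in S^{1}$. Concretely, choose a Seifert surface $F_0$ for $D_0$ that consists of two parallel sheets near the site of the smoothed crossing and form Seifert surfaces $F_\pm$ for $D_\pm$ by attaching there a band carrying the appropriate half-twist; the Seifert matrices $V_\pm$ then agree except in a single diagonal entry, with $V_-=V_++e_ke_k^{\top}$, so the Hermitian form $(1-\omega)V_-+(1-\overline{\omega})V_-^{\top}$ is obtained from $(1-\omega)V_++(1-\overline{\omega})V_+^{\top}$ by adding the positive semidefinite rank-one matrix $|1-\omega|^{2}e_ke_k^{\top}$, whence $\sigma_\omega(D_-)\ge\sigma_\omega(D_+)$ (cf.\ \cite{pt}). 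Equivalently, descending from the root always to the child in the $D_-$-slot exhibits $D$ as a diagram that becomes an unlink after a sequence of positive-to-negative crossing changes, each of which can only raise $\sigma_\omega$. Either way, $\sigma_\omega(K_c)=\sigma_\omega(D_+)\le\sigma_\omega(D_-)=\sigma_\omega(K_{c'})\le 0$ by the inductive hypothesis, which closes the induction.

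The argument is very short, and the only delicate point is the \emph{direction} of the signature inequality: one must check that passing from a positive crossing to a negative one perturbs the symmetrized Seifert form by a \emph{positive} semidefinite rank-one term, which is exactly what makes the induction work and where the normalization $\sigma(\text{positive trefoil})<0$ is felt. Note finally that the smoothing $D_0$ enters only in part (i); for part (ii) the comparison along the $D_-$-branch alone suffices.
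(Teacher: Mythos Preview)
Your proof is correct and follows essentially the same approach as the paper: both arguments use the Conway skein relation together with the non-negativity of $\nabla_{U_n}$ for (i), and the monotonicity $\sigma_\omega(K_+)\le\sigma_\omega(K_-)$ under positive-to-negative crossing change for (ii). The only difference is presentational: you organize the argument as an explicit leaf-to-root induction and supply a Seifert-matrix justification of the signature inequality, whereas the paper simply invokes $\sigma_\omega(K_-)\ge\sigma_\omega(K_+)$ as a known property and observes that the positive resolution tree exhibits $K$ as an unlink after only positive-to-negative crossing changes.
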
 
\begin{proof}
(i) follows from the skein relation of the Conway polynomial
\[ \nabla_{K_+}(z) =  \nabla_{K_-}(z)+ z\nabla_{K_0}(z)\]
and the observation that for an $n$-component unlink $U_n$, $\nabla_{U_n} = \begin{cases} 1 & (n=1) \\ 0 & (otherwise) \end{cases}$ is a non-negative polynomial .

(ii) follows from the property of the Levine-Tristram signature
\[ \sigma_{\omega}(K_-)\geq \sigma_{\omega}(K_+) \]
and the observation that the existence of positive skein resolution tree implies that one can make $K$ unlink only by crossing changes at positive crossings.
\end{proof}

\subsection{The canonical Seifert surface of a good successively almost positive diagram}

Theorem \ref{theorem:SQP} and Theorem \ref{theorem:canonical-surface} follows from a characterization of quasipositive canonical Seifert surface \cite[Theorem B]{fll}. Here we give a direct proof based on Murasgui-Prytzcki's operation reducing the number of Seifert circle \cite{mp}. This tells us how to get a strongly quasipositive braid representative of $K$.

A crossing $c$ of $D$ is called \emph{independent} if $c$ connects two Seifert circles $s,s'$ then there are no other crossing connecting $s$ and $s'$.

For an independent crossing $c$ connecting Seifert circles $s,s'$, let $s_1,\ldots,s_{k}$ be the Seifert circles of $D$ other than $s'$ connected to $s$ by a crossing. Let $c_1,\ldots,c_n$ be the crossings that connects $s$ and $s_i$ ($i=1,\ldots,k$).

We move the underarc of the crossing $c$ across one of the Seifert circles $s$, swallowing all the Seifert circles $s_1,\ldots, s_n$ and the crossings $c_1,\ldots,c_n$ to get a new diagram $D'$. We call this operation the \emph{Murasugi-Prytzcki's move} (\emph{MP-move}, in short) at $c$ (see Figure \ref{fig:MP-move}). 
The MP-move reduces the number of Seifert circles of $D$ by one.

\begin{figure}[htbp]
\includegraphics*[width=60mm]{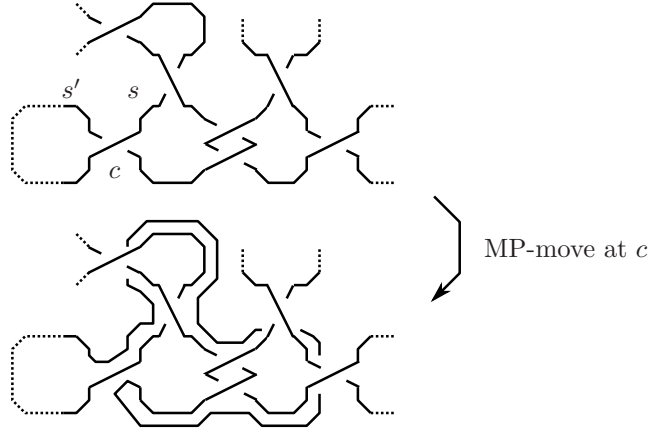}
\begin{picture}(0,0)
\put(-137,95) {$c$}
\put(-155,125) {$s'$}
\put(-130,125) {$s$}
\put(5,65) {MP-move at $c$}
\end{picture}
\caption{Murasugi-Prytzcki's move at the independent crossing $c$} 
\label{fig:MP-move}
\end{figure} 

We view the MP-move as an isotopy of canonical Seifert surface of $D$ that flips the twisted bands corresponding to $c$ and disk bounded by $s$ to merge the disk bounded by $s$ and $s'$ into a single disk. After the MP-move, the Seifert surface still has a disk-and-twisted-band decomposition structure (see Figure \ref{fig:MP-move-surface}).
 
\begin{figure}[htbp]
\includegraphics*[width=60mm]{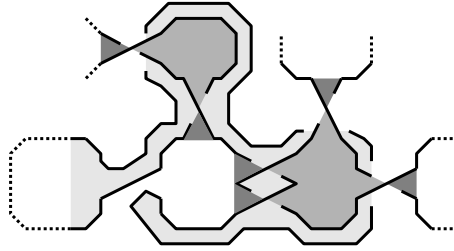}
\begin{picture}(0,0)
\end{picture}
\caption{Murasugi-Prytzcki's move viewed as an isotopy of canonical Seifert surface; the Seifert surface inherits a disk-and-twisted-band decomposition.} 
\label{fig:MP-move-surface}
\end{figure} 
\begin{proof}[Proof of Theorem \ref{theorem:canonical-surface} and Theorem \ref{theorem:SQP}]
When $D$ is good successively $k$-almost positive diagram, we can apply the MP-move $k$ times to get a diagram $D'$ (see Figure \ref{fig:MP-move2}).

Then $s(D') = s(D)-k$ and $w(D')=w(D)+k$.
By the Bennequin's inequality $-s(D')+w(D') \leq -\chi(K)$ hence we get
\begin{align*}
-s(D)+w(D)+2k & = -s(D')+ w(D') \\
& \leq -\chi(K)\\
& \leq -\chi(S_D)= -s(D)+c(D) = -s(D)+w(D)+2k.
\end{align*}
Therefore the canonical Seifert surface $S_D$ attains the maximal euler characteristic.

Moreover, as in the single MP-move case, the sequence of MP-moves also can be seen as an isotopy of canonical Seifert surface of $D$; we see that the canonical Seifert circle $S_D$ can be understood as a surface made of $s(D')=s(D)-k$ disks and $w(D')=w(D)+k$ positively twisted bands.
This disk-and-twisted-band decomposition satisfies a certain nice condition, which we called \emph{quasi-canonical Seifert surface} in \cite{hik}. As we have discussed in \cite[Section 6]{hik}, one can further deform the diagram into a closed braid diagram, preserving the disk and twisted band decomposition structure of $S_D$. Since every twisted band has twisted in a positive direction, the closed braid obtained from $S_D$ is the closure strongly quasipositive braid (see \cite[Theorem 6.4]{hik}.
\end{proof}

\begin{figure}[htbp]
\includegraphics*[width=85mm]{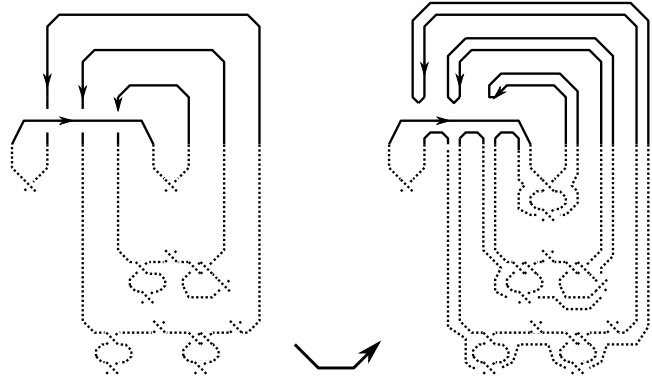}
\begin{picture}(0,0)
\end{picture}
\caption{Applying Murasgui-Prytzcki's move $k$ times for good successively $k$-almost positive diagram ($k=3$ case illustration).}
\label{fig:MP-move2}
\end{figure}

\subsection{Link polynomials and $\chi(K)$}

Theorem \ref{theorem:canonical-surface} allows us to relate $\chi(K)$ and the knot polynomials.

\begin{proof}[Proof of Theorem \ref{theorem:split-visible} and Theorem \ref{theorem:Qhomfibered}]

First we note that Theorem \ref{theorem:split-visible} follows from Theorem \ref{theorem:Qhomfibered}, since $\nabla_K(z)\neq 0$ implies that $K$ is non-split.
 
We prove the assumption by induction of the number $p$ of positive crossing of $D$.
When $p=0$, the assertion is clear. Let us consider the skein triple $(D=D_+,D_-,D_0)$ at the root of the descending skein tree (the resolution at the first non-descending crossing). Note that $D_0$ is a good successively $k$-almost positive diagram.

When $D_0$ is non-split, by induction and Theorem \ref{theorem:canonical-surface}, $\deg_z \nabla_{K_0}(z)=1-\chi(K_0)= 1- s(D) + (c(D)-1)= - \chi(K)$. 
On the other hand, 
\[ \nabla_{K_-}(z) \leq 1-\chi(K_-) \leq 1-\chi(D_-) = 1-\chi(D_+)=1-\chi(K).\]
Since $\nabla_K(z)=\nabla_{K_-}(z)+z\nabla_{K_0}(z)$ and both $\nabla_{K_-}(z)$ and $z\nabla_{K_0}(z)$ are non-negative, $\deg \nabla_{K}(z)= 1-\chi(K)$ as desired.  

When $D_0$ is split, the crossing $c$ should be a nugatory crossing so $K$ is represented by a non-split good successively $k$-almost positive diagram having $(p-1)$ positive crossings. Therefore by induction $\nabla_{K}(z)=1-\chi(K)$.

\end{proof}

\begin{proof}[Proof of Theorem \ref{theorem:HOMFLY}]
Assume that $K$ admits a good successively $k$-almost positive diagram $D$.
We prove the theorem by induction on $k$.

We consider the skein triple $(D_+,D=D_-,D_0)$ obtained by the first negative crossing $D$. 
Then $D_+$ and $D_0$ are good successively $(k-1)$-almost positive diagram hence by Theorem \ref{theorem:canonical-surface}
\[ \chi(K_+)=\chi(K), \chi(K_0)=\chi(K)-1\]
Therefore by induction
\[\max \deg_z P_{K_+}(v,z) = 1-\chi(K), \max \deg_z P_{K_0}(v,z)=-\chi(K). \]

By the skein relation $P_{K}(v,z) = v^{-2}P_{K_{+}}(v,z) - v^{-1} zP_{K_{0}}(v,z)$ we get 
\[ \max \deg_z P_{K}(v,z) \leq 1-\chi(K).\]
On the other hand,  by Theorem \ref{theorem:Qhomfibered}
\[ 1-\chi(K)=\deg_z \nabla_K(z) = \max \deg_z P_K(v,z)|_{v=1} \leq \max \deg_z P_{K}(v,z) \]
hence we conclude that $\max \deg_z P_{K}(v,z) = 1-\chi(K)$.

Next we show $\min \deg_v P_{K}(v,z) = 1-\chi(K)$. 
By the skein relation (characterization) of the HOMFLY polynomial $P_K(v,-v^{-1}+v)=1$. This implies that $P_K(v,z)$ contains a monomial $v^a z^{b}$ whose coefficient is non-zero and $a-b \leq 0$. Thus 
\[ \min \deg_v P_K(v,z) \leq a \leq b \leq \max \deg_z P_K(v,z). \] 
On the other hand, by the Morton-Franks-William inequality 
\[ \overline{sl}(K) +1 \leq \min \deg_v P_K(v,z) \]
where $\overline{sl}(K)$ is the maximal self-linking number of $K$. Since we have seen that $K$ is strongly quasipositive, $\overline{sl}(K)=-\chi(K)$. Combining the (in)equalities we get
\[ -\chi(K) +1= \overline{sl}(K) +1  \leq \min \deg_v P_K(v,z) \leq \max \deg_z P_K(v,z) = -\chi(K)+1.\]
\end{proof}

\subsection{Signature estimate}

We turn to our attention to Theorem \ref{theorem:signature-improved}.
In large part, our argument goes the same line as Baader-Dehornoy-Liechti's argument \cite{bdl} adapted so that it can be applied to general link diagrams with slight improvements.

To treat the signature, we use Gordon-Litherland's theorem \cite{gl}. For a (possibly non-orientable) spanning $F$ of a link $K$, let $\langle \; , \; \rangle_F :H_1(F) \times H_1(F) \rightarrow \Z$ be the \emph{Gordon-Litherland pairing} of $F$; for $a,b \in H_1(F)$, let $\alpha,\beta$ be curves on $S$ that represent $a$, $b$, and let $p_F: \nu(F) \rightarrow F$ be the unit normal bundle of $F$. The Gordon-Litherland pairing of $a$ and $b$ is defined by $\langle a , b \rangle_F = lk(\alpha, p_S^{-1}(\beta))$, where $lk$ denotes the linking number.

When $K$ is an $\ell$-component link $K_1 \cup \cdots \cup K_{\ell}$, the Gordon-Litherland theorem states
\begin{equation}
\label{theorem:GL} \sigma(K)= \sigma(F) + \frac{1}{2}e(F,K).
\end{equation}
Here $\sigma(F)$ is the signature of the Gordon-Litherland pairing of $F$, and 
\[ e(F,K)= - \frac{1}{2}\langle [K] , [K] \rangle_F - \sum_{1\leq i<j \leq \ell}lk(K_i,K_j). \]

For a link diagram $D$, we fix one of its checkerboard coloring, and let $B$ and $W$ be the black and white checkerboard surface of $D$. We say that a crossing $c$ of $D$ is \emph{of type a} (resp. \emph{of type b}) if, when we put the overarc so that it is an horizontal line, the upper right-hand side and the lower left-handed side (resp. the lower right-hand side and the upper left-handed side) are colored by black (see Figure \ref{fig:type-crossings}). Similarly, we say that a crossing $c$ is \emph{of type I} (resp. \emph{of type II}) if the black region is compatible (resp. incompatible) with the orientation of the diagram (see Figure \ref{fig:type-crossings}). In the definition of type a/b, the orientation of $D$ is irrelevant whereas the definition of type I/II, the over-under information is irrelevant.

We say that a crossing $c$ is \emph{of type $Ia$}, for example, if $c$ is both of type I and of type a. We put $c_{Ia},c_{Ib},c_{IIa},c_{IIb}$ the number of crossings of type $Ia,Ib,IIa,IIb$, respectively. Note that positive (resp. negative) crossing is either of type Ib or IIa (resp. Ia or IIb) so
\begin{equation}
\label{eqn:ab-pm} c_+(D)= c_{Ib} + c_{IIa}, \quad c_{-}(D)=c_{Ia}+c_{IIb}.
\end{equation}

\begin{figure}[htbp]
\includegraphics*[width=90mm]{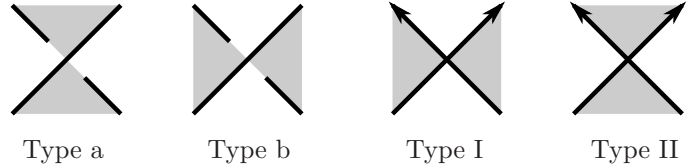}
\begin{picture}(0,0)
\put(-255,-15) {Type a}
\put(-185,-15) {Type b}
\put(-110,-15) {Type I}
\put(-40,-15) {Type II}
\end{picture}
\medskip\medskip\medskip
\caption{Types of crossings with respect to the checkerboard coloring}
\label{fig:type-crossings}
\end{figure}

By the definition of the Gordon-Litherland pairing,
\[ \frac{1}{2}e(B,K) = c_{IIb} - c_{IIa}, \quad \frac{1}{2}e(W,K) = c_{Ia} - c_{Ib}.\]
Thus by (\ref{theorem:GL}) and (\ref{eqn:ab-pm})
\begin{equation}
\label{eqn:sgn-part-1}
 2\sigma(K) = \sigma(B) +\sigma(W) - c_+(D) + c_-(D) 
\end{equation}

We give an estimation of the signature of Gordon-Litherland pairings. Let $\mathcal{R}(W)$ and $\mathcal{R}(B)$ be the set of white and black regions, respectively. For a white region $R \in \mathcal{R}(W)$, we associate a simple closed curve $\gamma_R$ on $B$ which is a mild perturbation of the boundary of $R$ (see Figure \ref{fig:gamma_R}).  

We say that the region $R$ is \emph{of type $(\alpha,\beta)$} if $R$ contains $\alpha$ type a crossings and $\beta$ type b crossings as its corners. By definition, 
\[ \langle [\gamma_R], [\gamma_R] \rangle_B = \alpha-\beta \]
when $R$ is of type $(\alpha,\beta)$.

\begin{figure}[htbp]
\includegraphics*[width=30mm]{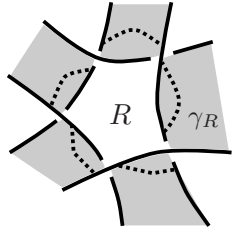}
\begin{picture}(0,0)
\put(-50,40){\large $R$}
\put(-20,40){$\gamma_R$}
\end{picture}
\caption{Curve $\gamma_R$ for a white region $R$}
\label{fig:gamma_R}
\end{figure} 

For a black region $R \in \mathcal{R}(B)$, the curve $\gamma_R$ on $W$ and a notion of type $(\alpha,\beta)$ are defined similarly and the Gordon-Litherland pairing is given by
\[ \langle [\gamma_R], [\gamma_R] \rangle_W = \beta-\alpha \]
when $R$ is of type $(\alpha,\beta)$.

\begin{proof}[Proof of Theorem \ref{theorem:signature-improved}]
Since $D$ is reduced, there are no regions of type $(1,0)$ or $(0,1)$.
Moreover, since we assume that $K$ is non-trivial, $\# \mathcal{R}(D),\#\mathcal{B}(D) \geq 2$.

Let $\Gamma$ be the planar graph whose vertices $\mathcal{V}(\Gamma)$ are white regions of type $(\alpha,\beta)$ with $\alpha-\beta \leq 0$, and two vertices $R,R'$ are connected by an edge if they share a corner. By Appel-Haken's four-color theorem\footnote{It is interesting to find an argument that avoids to use the four-color theorem.} \cite{ah} there is a 4-coloring map $col:V(\Gamma) \rightarrow \{1,2,3,4\}$; if two vertices $v,v'$ are connected by an edge, then $col(v) \neq col(v')$. Let $\mathcal{V}' = col^{-1}(\{1,2\})$. 
Let $\Gamma'$ be the subgraph of $\Gamma$ whose vertices are $\mathcal{V}'$, and let $V_B$ be the subspace of $H_1(B)$ generated by $[\gamma_R]$ for $ R \in \mathcal{V}'$. 
With no loss of generality, we assume that $\#\mathcal{V}' \geq \frac{1}{2} \#\mathcal{V}(\Gamma)$.

Since $\Gamma'$ is bipartite, the restriction of Gordon-Litherland pairing $\langle \; , \; \rangle_F$ on $V_B$ is of the form $\begin{pmatrix}D_1 & X^{T} \\ X & D_2 \end{pmatrix}$ where $D_1,D_2$ are diagonal matrices with non-positive diagonals, hence the Gordon-Litherland pairing is non-positive on $V_B$.

First we assume that $\mathcal{V}'$ is not equal to the whole $\mathcal{R}(W)$. In this case $\{ [\gamma_R] \: |\: R \in \mathcal{V}'\}$ is a basis of $V_B$ so $\dim V_B = \# \mathcal{V}'$. Let $\gamma^{W}_{>0}$ be the number of white region $R$ such that $\langle [\gamma_R], [\gamma_R] \rangle_B>0$. Then
\[ \dim V_B = \# \mathcal{V}' \geq \frac{1}{2}\# \mathcal{V}(\Gamma) = \frac{1}{2}(\# \mathcal{R}(W) - \gamma^{W}_{>0}) \]

On the other hand, when $\mathcal{V}'$ coincides with $\mathcal{R}(W)$, then $\dim V_{B}= \dim H_1(B)=\# \mathcal{R}(W)-1$. Since $\mathcal{R}(W)\geq 2$, we have the same lower bound of $\dim V_B$;
\[ \dim V_B = \# \mathcal{R}(W)-1 \geq \frac{1}{2} \# \mathcal{R}(W) \geq \frac{1}{2}(\# \mathcal{R}(W) - \gamma^{W}_{>0})  \]

Therefore we get an upper bound of the signature of the Gordon-Litherlanf pairing of $B$\footnote{This is a point where a minor improvement (constant $-\frac{1}{2}$ in the conclusion) appears.}.
\begin{align*}
\sigma(B) &\leq \dim H_1(B) - \dim V_B = (\# \mathcal{R}(W)-1)-\dim V_B\\
& \leq \frac{1}{2}\# \mathcal{R}(W) -1 + \frac{1}{2}\gamma^{W}_{>0} 
\end{align*}

By a parallel argument for the white surface $W$, we get a similar estimate
\[ \sigma(W) \leq \frac{1}{2} \# \mathcal{R}(B)-1 + \frac{1}{2}\gamma^{B}_{>0} \]
where $\gamma^{B}_{>0}$ is the number of a black region $R$ such that $\langle \gamma_R, \gamma_R\rangle_W>0$.

Since $\# \mathcal{R}(W)+\# \mathcal{R}(B)-2 = c(D)$, by (\ref{eqn:sgn-part-1}) we get
\begin{equation}
\label{eqn:sigma-gamma}
2\sigma(K) \leq -\frac{1}{2}c(D) +2c_{-}(D)+ \frac{1}{2} \gamma_{>0}^{B} + \frac{1}{2} \gamma_{>0}^{W}-1
\end{equation}

It remains to estimate $\gamma_{>0}^{B}$ and $\gamma_{>0}^{W}$.
Let $\gamma^{W}(\alpha,\beta)$ be the number of white regions of type $(\alpha,\beta)$. By definition of $\gamma^{W}_{>0}$,
\[ \gamma^{W}_{>0} = \sum_{\substack{\alpha>\beta\geq 0 \\ \alpha\geq 2}} \gamma^W(\alpha,\beta). \]
By counting the number of the crossings of type $a$ that appear as a corner of white regions, we get
\begin{align*}
2(c_{Ia}+c_{IIa}) &= \sum_{\alpha,\beta\geq 0} \alpha \gamma^{W}(\alpha,\beta) \\
& \geq 2 \gamma^{W}(2,0) +2 \gamma^{W}(2,1) + 3\sum_{\substack{\alpha>\beta\geq 0 \\ \alpha\geq 3}} \gamma^W(\alpha,\beta) \\
& = -\gamma^{W}(2,0) - \gamma^{W}(2,1) + 3\sum_{\substack{\alpha>\beta\geq 0 \\ \alpha\geq 2}} \gamma^W(\alpha,\beta)\\
& = -\gamma^{W}(2,0) - \gamma^{W}(2,1) + 3\gamma^{W}_{>0}.
\end{align*}
Thus we conclude
\begin{equation}
\label{eqn:gamma_0}
\gamma^{W}_{>0} \leq \frac{2}{3}(c_{Ia}+c_{IIa})+\frac{1}{3}\gamma^{W}(2,0) +\frac{1}{3}\gamma^{W}(2,1) 
\end{equation}

Recall that a positive crossing appears as either of type Ib or of type IIa. If the corner of a white region $R$ is a crossing of type IIa, then the orientation of link and the orientation of the white region switch. Thus if all the corners of $R$ are positive crossings, then the region $R$ must be of type $R(\alpha,2\beta)$.
In particular, if a white region $R$ is of type $(2,1)$, at least one of its corner is a negative crossing. Similarly, if a white region $R$ is of type $(2,0)$, its corners are either both positive, or, both negative.

Let $\gamma^{W}_{\pm}(2,0)$ be the number of white regions of type $(2,0)$ whose corners are positive and negative crossings, respectively. 
By counting the number of negative crossings that appear as a corner of type $(2,1)$ regions or type $(2,0)$ regions we get
\[ \gamma^{W}(2,1) + 2\gamma^{W}_{-}(2,0) \leq 2c_{-}. \]
Thus
\[ \gamma^W(2,1) + \gamma^W_{-}(2,0) \leq 2c_{-}. \]

Let $s_W(D)$ be the number of  Seifert circles of $D$ which are the boundary of white bigon. When both corners of a white region $R$ of type $(2,0)$ are positive, then the boundary of $R$ forms a Seifert circle of $D$ so\footnote{A substantial improvement appears at this point; In \cite{bdl} they used an upper bound of $\gamma^{W}_+(2,0)$ in terms of the crossing numbers, as we do for $\gamma^{W}_-(2,0)$, instead of the number of Seifert circles.} 
\[ \gamma^{W}_{+}(2,0) \leq s_W(D).\]
By (\ref{eqn:gamma_0}) we conclude
\begin{equation}
\label{eqn:gammaW}
\gamma^{W}_{>0} \leq \frac{2}{3}(c_{Ia}+c_{IIa})+\frac{2}{3}c_- +s_W(D).
\end{equation}

By the same argument for the white surface, we get a similar inequality
\begin{equation}
\label{eqn:gammaB}
\gamma^{B}_{>0} \leq \frac{2}{3}(c_{IIa}+c_{Ib})+\frac{2}{3}c_- +s_B(D).
\end{equation}
where $s_B(D)$ is the number of Seifert circles which are the boundary of black bigon.

Since a Seifert circle cannot be the boundary of white bigon and black bigon at the same time,
\[ s_W(D)+s_B(D) \leq s(D). \]
The equality happens only if all the Seifert circles are bigon. Thus the equality occurs only if $D$ is the standard torus $(2,2n)$ link diagram (with opposite orientation, so that they bound an annulus). In this case, the asserted inequality of signature is obvious, so in the following we can assume that a bit stronger inequality 
\begin{equation}
\label{eqn:s-circle} s_W(D)+s_B(D) \leq s(D)-1. 
\end{equation}

By (\ref{eqn:gammaW}), (\ref{eqn:gammaB}), and (\ref{eqn:s-circle})
\begin{align*}
\gamma^{B}_{>0} + \gamma^{W}_{>0} &\leq  \frac{2}{3}(c_{Ia}+c_{IIa}+c_{Ib}+c_{IIb}) + \frac{1}{3}(s(D)-1) + \frac{4}{3}c_{-}(D) \\
& =\frac{2}{3}c(D) + \frac{1}{3}(s(D) -1) + \frac{4}{3}c_{-}(D).
\end{align*}

Therefore by (\ref{eqn:sigma-gamma}) we conclude 
\begin{align*}
2\sigma(K) & \leq -\frac{1}{2}c(D) +2c_-(D) + \frac{1}{2} \gamma_{>0}^{B} + \frac{1}{2} \gamma_{>0}^{W}-1  \\
& \leq -\frac{1}{2}c(D) +2c_-(D) + \frac{1}{3}c(D) +\frac{1}{6}(s(D)-1) + \frac{2}{3}c_{-}(D)-1 \\
 & = \frac{1}{6}(s(D)-c(D)-1) + \frac{8}{3}c_-(D)-1.
\end{align*}

\end{proof}

Once a signature estimate from canonical Seifert surface is established, Theorem \ref{cor:finite-concordance} (the concordance finiteness) can be proved by almost the same argument as \cite[Theorem 1.1]{bdl}.
 
Recall the Levine-Tristram signature $\sigma_{\omega}(K)$ of a link $K$ is topological concordance invariant whenever $\Delta_\omega(K)\neq 0$ so if we take $\omega$ as non-algebraic number, then $\sigma_{\omega}$ is always a topological concordance invariant.

\begin{proof}[Proof of Theorem \ref{cor:finite-concordance}]

Assume, to the contrary that the topological concordance class $\mathcal{K}$ contains infinitely many successively positive knots $\{K_i\}$.

Let $D_i$ be a successively $k_i$-almost positive diagram of $K_i$. By assumption,
$k_i \leq d g^{top}_c(K) \leq d g(K_i) \leq d g(D_i)$. Therefore by Theorem  \ref{theorem:signature-improved} 
\begin{align*}
\sigma(\mathcal{K}) = \sigma(K_i) 
& \leq  -\frac{1}{6}g(D_i)+\frac{4}{3}k_i -\frac{1}{2} \leq \left( -\frac{1}{6} + \frac{4}{3}d \right)g(D_i).
\end{align*}
Since $d<\frac{1}{8}$, $(-\frac{1}{6} +\frac{4}{3}d)<0$. Therefore 
\[ g(D_i) \leq \frac{6}{8d-1}\sigma(\mathcal{K}). \]

Since the canonical genus of the diagrams $\{D_i\}$ are bounded above, there is a finite set of diagrams $\mathcal{D}$ such that each $D_i$ is obtained from one of a diagram $D'_{i} \in \mathcal{D}$ by inserting full twists $N$ times  (often called the \emph{$\overline{t}_N$-move}) for some $N$, at appropriate crossings of $D'_{i}$ \cite[Theorem 3.1]{st1} (see Figure \ref{fig:t_N-move}).
Since every $D_i$ is successively almost positive, we may assume that the base diagram $D'_i \in \mathcal{D}$ is successively positive, and that $D_i$ is obtained by inserting \emph{positive} twists.

\begin{figure}[htbp]
\includegraphics*[width=60mm]{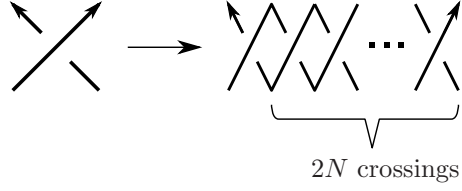}
\begin{picture}(0,0)
\put(-60,-10){$2N$ crossings}
\put(-20,40){}
\end{picture}
\medskip
\medskip
\caption{Inserting $N$ full twists at the crossing $c$ (often called the $\overline{t}_N$-move)}
\label{fig:t_N-move}
\end{figure}

Due to the finiteness of $\mathcal{D}$, there are only finitely many places to insert full twists. Thus there is a diagram $D_{\sf base} \in \mathcal{D}$ and a crossing $c$ of $D_{\sf base}$ having the following property;
for any $N>0$, there is a knot $K$ in $\{K_i\}$ such that $K$ is obtained from $\mathcal{D}$ by inserting full twist at least $N$-times at $c$ and inserting appropriate (positive) twists at other crossings.

Let $D(N)$ be the diagram obtained from $D_{\sf base}$ by inserting $N$-full twists at the crossing $c$, and let $K(N)$ be the knot represented by $D(N)$.
The above observation says that there exists a successively positive knot $K$ in the concordance class $\mathcal{K}$ such that $K(N)$ is obtained from $K$ by the positive-to-negative crossing changes.
Thus for every $N>0$ we have an inequality
\begin{equation}
\label{eqn:K-N} \sigma_{\omega}(K(N))\geq \sigma_{\omega}(K)=\sigma_{\omega}(\mathcal{K}).
\end{equation}

Let $D_{0}$ be the link diagram obtained by smoothing the crossing $c$ of $D_{\sf base}$ and let $L$ be the link represented by $D_{0}$. Since the canonical Seifert surface $S_N$ of $D(N)$ is obtained from the canonical Seifert surface $S_0$ of $D_0$ by adding an $N$-twisted band, the Seifert matrix $A(N)$ of $S_N$ is of the form
\begin{equation*}
 A(N)= \begin{pmatrix}-N+a & w \\ v & A(S_0) \end{pmatrix} 
\end{equation*}
where $a$ is a constant and $A(S_0)$ is the Seifert matrix of $S_0$.

Take a non-algebraic $1\neq \omega \in \{z \in \C \: | \: |z|=1\}$ sufficiently close to $1$ so that $\sigma_{\omega}(\mathcal{K})=0$ holds. By definition, $\sigma_{\omega}(K(N))$ and $\sigma_{\omega}(L)$ are the signatures of
\begin{align*}
A_{\omega}(N) & = (1-\omega)A(N) + (1-\overline{\omega})A(N)^T \\
 & = \begin{pmatrix} (2-2\mathsf{Re}(\omega))(-N+a) & (1-\omega)w+(1-\overline{\omega})v^{T} \\
(1-\omega)v+(1-\overline{\omega})w^{T} & (1-\omega)A(S_{0}) + (1-\overline{\omega})A(S_{0})^T \end{pmatrix}\\
A_{\omega}(0)&= (1-\omega)A(S_{0}) + (1-\overline{\omega})A(S_{0})^T.
\end{align*}Therefore by the cofactor expansion
\[  \det A_{\omega}(N) = (2-2\mathsf{Re}(\omega))(-N+a)\det A_{\omega}(0)+ C\]
where $C$ is a constant that does not depend on $N$.
Since we have chosen $\omega$ so that it is non-algebraic, $\det A_{\omega}(S_{0}) \neq 0$. Thus when $N$ is sufficiently large, the sign of $\det A_{\omega}(N)$ and $\det A_{\omega}(S_{D_0})$ are different, which means that the matrix $A_{\omega}(N)$ has one more negative eigenvalue than $A_{\omega}(0)$. Thus for sufficiently large $N$
\[ \sigma_{\omega}(K(N))= \sigma_{\omega}(L)-1. \]

On the other hand, since $D_0$ is successively positive, by Theorem \ref{theorem:signature-improved} $\sigma_{\omega}(L) \leq 0$. Therefore by (\ref{eqn:K-N})
\[ \sigma_{\omega}(\mathcal{K}) \leq \sigma_{\omega}(K(N)) =\sigma_{\omega}(L)-1 \leq -1. \]
Since we have chosen $\omega$ so that $\sigma_{\omega}(\mathcal{K})=0$, this is a contradiction.
\end{proof}

\section{Discussions and Questions}

Our argument so far justifies the assertion that a (good) successively $k$-almost positive diagram is natural and more appropriate generalization of positive links than a $k$-almost positive diagram. 
However, since our results also says that the difference between (almost) positive links and (good) successively $k$-almost positive links are subtle to distinguish.

In fact, at this moment we do not know an example of links which are (good) successively almost positive but is not almost positive, although we believe that there are many such links. This is mainly because we have no useful technique to exclude various candidates are indeed not almost positive.

\subsection{Positive v.s. almost positive}

Since positive links and almost positive links already share many properties, distinguishing almost positive links with positive link is not an easy task.

The simplest example of almost positive, but not positive knot is $10_{145}$; $10_{145}$ admits an almost positive diagram of type I. The non-positivity can be detected by Cromwell's theorem \cite{cr} that $c(K) \leq 4g(K)$ if $K$ is positive and its Conway polynomial $\nabla_K(z)$ is monic, or, the property of the HOMFLY and the Kauffman polynomial of positive knot (\ref{eqn:Yokota}) which we discuss in the next section.

\subsection{Almost positive of type I v.s. Almost positive of type II}
\label{sec:ap-I-vs-II}
Our argument so far tells that a good successively almost positive diagram has better properties than a usual almost positive diagram. 

However, we note that it can happen non-good almost positive links share a property of positive links that fails for good almost positive links.

An almost positive diagram $D$ is  
\begin{itemize}
\item[-] \emph{of type I} if $D$ is good successively almost positive; for two Seifert circles $s$ and $s'$ connected by the unique negative crossing $c_-$, there are no other crossings connecting $s$ and $s'$.
\item[-] \emph{of type II} otherwise; there are positive crossings connecting $s$ and $s'$. 
\end{itemize}

An almost positive diagram of type I is nothing but a good successively almost positive diagram. For an almost positive diagram $D$, $S_D$ attains the maximum euler characteristic if and only if $D$ is of type I \cite{st}. In particular, when $D$ is an almost positive diagram of type II, we have $\chi(K)=\chi(D)+2$. The dichotomy plays a fundamental role in a study of almost positive diagrams and links -- the proof of various properties of almost positive links often splits into the analysis of two cases \cite{st,fll}.

Let $D_K(a,z)$ be the Dubrovnik version of the Kauffman polynomial;
$D_K(a,z)=a^{-w(D)}\Lambda_{D}(a,z)$, where $\Lambda_D(a,z)$ is the regular isotopy invariant defined by the skein relations
\[ \Lambda_{ \raisebox{-1mm}{\includegraphics*[width=4mm]{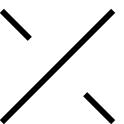}}}(a,z) - \Lambda_{ \raisebox{-1mm}{\includegraphics*[width=4mm]{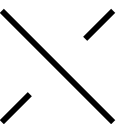}}}(a,z) = z(\Lambda_{ \raisebox{-1mm}{\includegraphics*[width=4mm]{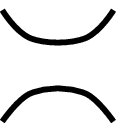}}} -\Lambda_{ \raisebox{-1mm}{\includegraphics*[width=4mm]{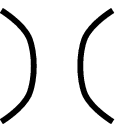}}}), \]

\[  \Lambda_{ \raisebox{-1mm}{\includegraphics*[width=4mm]{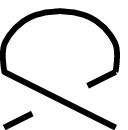}}}(a,z) = a \Lambda_{ \raisebox{-1mm}{\includegraphics*[width=4mm]{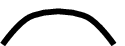}}}(a,z), \Lambda_{ \raisebox{-1mm}{\includegraphics*[width=4mm]{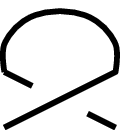}}}(a,z) = a^{-1} \Lambda_{ \raisebox{-1mm}{\includegraphics*[width=4mm]{skeinst.eps}}}(a,z), \Lambda_{ \raisebox{-1mm}{\includegraphics*[width=4mm]{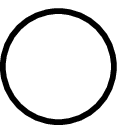}}}(a,z) = 1. \]

We express the Dubrovnik polynomial $D_K(a,z)= \sum_{i} D_K(z;i)a^{i}$. Similarly, we express the HOMFLY polynomial $P_K(v,z)=\sum_{i}P_K(z;i)v^{i}$. In \cite{yo} Yokota showed that when $K$ is positive,
\begin{equation}
\label{eqn:Yokota}
P_K(z;1-\chi(K))=D_K(z;1-\chi(K))\neq 0
\end{equation}
holds. Moreover, this is a non-negative polynomial \cite[Corollary 4.3]{cr}.

At first glance, this coincidence sounds mysterious but it turns out Legendrian link point of view brings illuminating explanation.

For a basis of Legendrian links we refer to \cite{et}.
Let $\mathcal{D}$ be the front diagram of a Legendrian link $K$. The subset $\rho$ of the set of crossings of $\mathcal{D}$ is a \emph{ruling} if the diagram $\mathcal{D}_{\rho}$ obtained from $\mathcal{D}$ by taking the horizontal smoothing $\LCross \to \Smooth$ at each crossing in $\rho$ satisfies the following conditions;
\begin{itemize}
\item Each component of $\mathcal{D}_{\rho}$ is the standard diagram of the Legendrian unknot (i.e it contains two cusps and no crossings).
\item For each $c \in \rho$, let $P$ and $Q$ be the component of $D_{\mathcal{\rho}}$ that contains the smoothed arcs at $c$. 
Then $P$ and $Q$ are differents component of $\mathcal{D}_{\rho}$, and in the vertical slice around $c$, two components $P$ and $Q$ are not nested -- they are aligned in one of the configuration in Figure \ref{fig:ruling} (ii).
\end{itemize}

\begin{figure}[htbp]
\includegraphics*[width=70mm]{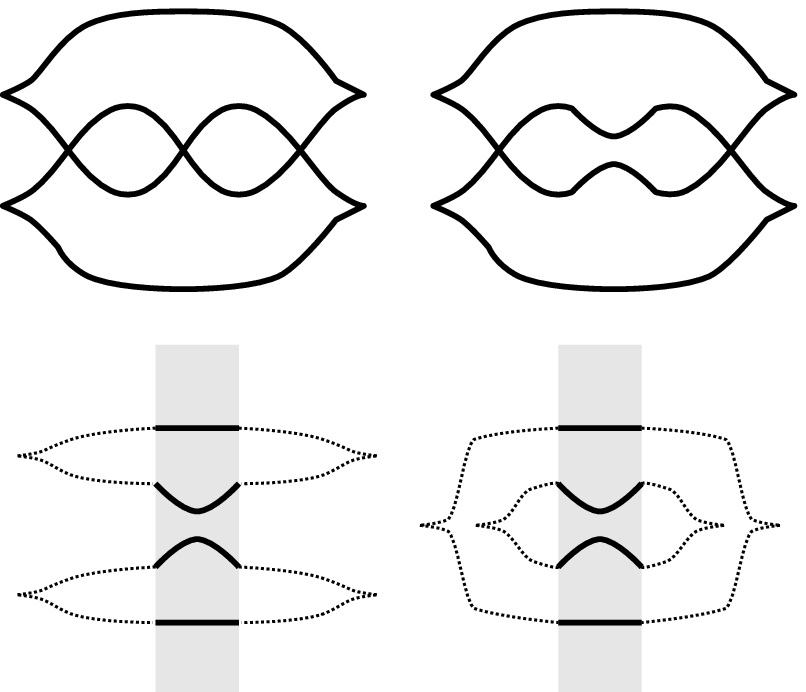}
\begin{picture}(0,0)
\put(-210,170) {(i)}
\put(-210,80) {(ii)}
\put(-195,133) {$a$}
\put(-166,133) {$b$}
\put(-137,133) {$c$}
\put(-120,100) {\large $\mathcal{D}$}
\put(0,100) {\large $\mathcal{D}_{\{b\}}$}
\end{picture}
\caption{(i) Front diagram $\mathcal{D}$ and $\mathcal{D}_{\rho}$ for $\rho=\{b\}$; $\rho$ is not a ruling. (ii) Normality condition for the ruling.}
\label{fig:ruling}
\end{figure}

A ruling is \emph{oriented} if every element of $\rho$ is a positive crossing.
Let $\Gamma(\mathcal{D})$ and $O\Gamma(\mathcal{D})$ be the set of rulings and oriented rulings, respectively.

For a ruling $\rho$ we define 
\[ j(\rho)=\# \rho - l\mbox{-cusp}(\mathcal{D}) +1,\]
 where $l\mbox{-cusp}(\mathcal{D})$ denotes the number of the left cusps of $\mathcal{D}$.
In \cite[Theorem 3.1, Theorem 4.3]{rt}, Rutherford showed that the ruling polynomials, a graded count of the (oriented) rulings, are equal to the coefficient polynomials of Dubrovnik and HOMFLY polynomials, respectively;
\begin{equation}
\label{eqn:ruling-formula}
D_K(z;tb(\mathcal{D})+1) = \sum_{\rho \in \Gamma(\mathcal{D})} z^{j(\rho)}, \quad P_K(z;tb(\mathcal{D})+1) = \sum_{\rho \in O\Gamma(\mathcal{D})} z^{j(\rho)}. 
\end{equation}
Here $tb(\mathcal{D})$ is the Thurston-Benneuqin invariant, given by 
\[ tb(\mathcal{D}) = w(\mathcal{D}) -  l\mbox{-cusp}(\mathcal{D}). \]
In particular, by the HOMFLY/Kauffman bound of the maximum Thurston-Bennequin number $\overline{tb}(K)$
\[ \overline{tb}(K)+1 \leq \min \deg_v P_K(v,z), \quad  \overline{tb}(K)+1 \leq\min \deg_a D_K(v,z), \]
if a front diagram $\mathcal{D}$ admits a ruling, then $\mathcal{D}$ attains the maximum Thurston-Bennequin number $\overline{tb}(K)$ among its topological link types, and 
\[ \overline{tb}(K)+1 = \min \deg_v P_K(v,z) = \min \deg_a D_K(v,z).\]

One can view a positive diagram $D$ as a front diagram $\mathcal{D}$ \cite{ta}; each positive crossing is put so that in the form \Crossing, and each Seifert circle forms a front diagram having exactly two cusps (so $l\mbox{-cusp}(\mathcal{D}) =s(D)$). Then the set of all crossings forms a (oriented) ruling. Thus $\mathcal{D}$ attains the maximum Thurston-Bennequin number. In particular,
\[ \overline{tb}(K) = tb(\mathcal{D}) = c(D)-s(D) = -\chi(D) = -\chi(K).\]
Moreover, since all the crossings of $\mathcal{D}$ are positive, $\Gamma(\mathcal{D}) = O\Gamma(\mathcal{D})$. Thus the ruling polynomial formula (\ref{eqn:ruling-formula}) of the coefficients of the Dubrovnik and the HOMFLY polynomials leads to Yokota's equality (\ref{eqn:Yokota}) and its non-negativity.

A mild generalization of this argument shows that an almost positive link of type II shares the same property;

\begin{theorem}
\label{theorem:almost-type-II}
If $K$ admits an almost positive diagram of type II, then 
\[ P_K(z;1-\chi(K))=D_K(z;1-\chi(K)) \neq 0\]
and it is a non-negative polynomial.
\end{theorem}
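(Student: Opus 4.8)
The plan is to run the Legendrian front and ruling argument used above for positive diagrams, in a slightly more elaborate form, so that the coefficient at $a$-degree (equivalently, $v$-degree) $1-\chi(K)$ is computed by Rutherford's ruling formulas~(\ref{eqn:ruling-formula}). Fix an almost positive diagram $D$ of $K$ of type II, let $c_-$ be its unique negative crossing, $s,s'$ the two Seifert circles joined by $c_-$, and $c_+$ a positive crossing joining $s$ and $s'$ (which exists since $D$ is of type II). By the discussion in Section~\ref{sec:ap-I-vs-II}, $\chi(K)=\chi(D)+2$, so $1-\chi(K)=c(D)-s(D)-1$.

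First I would settle the HOMFLY assertion by a skein computation, which also re-proves Corollary~\ref{cor:type-II}. Applying the skein relation at $c_-$, so that $D=D_-$, let $K_+$ be obtained by the crossing change at $c_-$ and $K_0$ by its oriented smoothing. Then $D_+$ is a positive diagram with $\chi(K_+)=\chi(D_+)=\chi(D)$, and $D_0$ is a positive diagram in which $s$ and $s'$ have been merged, so $\chi(K_0)=\chi(D_0)=\chi(D)$ as well. Since $K_+$ and $K_0$ are positive, $\min\deg_v P_{K_+}=\min\deg_v P_{K_0}=1-\chi(D)$ by~\cite{cr}, and from $P_K=v^{-2}P_{K_+}-v^{-1}zP_{K_0}$ one sees that $v^{-2}P_{K_+}$ has lowest $v$-degree $-1-\chi(D)=1-\chi(K)$, whereas $v^{-1}zP_{K_0}$ has lowest $v$-degree $-\chi(D)=2-\chi(K)$. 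Hence $\min\deg_v P_K=1-\chi(K)$ and
\[ P_K(z;1-\chi(K))=P_{K_+}(z;1-\chi(K_+)), \]
which by Yokota's equality~(\ref{eqn:Yokota}) equals $D_{K_+}(z;1-\chi(K_+))$, is nonzero, and is a non-negative polynomial~\cite[Corollary~4.3]{cr}.

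It remains to prove $D_K(z;1-\chi(K))=P_K(z;1-\chi(K))$. I would split into cases according to whether the two strands of $D$ at $c_-$ are coherently or incoherently oriented (this depends on the pair $(s,s')$ only, by orientability of $S_D$). In the coherent case, bringing $c_-$ next to $c_+$ by an isotopy of $D$ (compare~\cite{st}) displays $c_-$ and $c_+$ as a reducible bigon; deleting it yields a \emph{positive} diagram $D'$ of $K$ with $s(D')=s(D)$ and $c(D')=c(D)-2$, hence $\chi(D')=\chi(K)$, and the conclusion is again~(\ref{eqn:Yokota}). In the incoherent case I would build a front diagram $\mathcal{D}$ of $K$ as follows: realize the positive diagram $D_0$ as a front with $l\text{-cusp}=s(D)-1$ and all $c(D)-1$ crossings positive, as in~\cite{ta}, and then \emph{un-smooth} $c_-$ by a local modification which, since $s$ and $s'$ meet incoherently there, can be performed by adding a single net left cusp and a single negative crossing. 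Then $w(\mathcal{D})=c(D)-2$ and $l\text{-cusp}(\mathcal{D})=s(D)$, so $tb(\mathcal{D})=c(D)-s(D)-2=-\chi(K)$, and Rutherford's formulas~(\ref{eqn:ruling-formula}) read
\[ D_K(z;1-\chi(K))=\sum_{\rho\in\Gamma(\mathcal{D})}z^{j(\rho)},\qquad P_K(z;1-\chi(K))=\sum_{\rho\in O\Gamma(\mathcal{D})}z^{j(\rho)}. \]
The crucial point is to arrange the local model at $c_-$ so that the two strands through it are nested in the vertical slice through $c_-$; then the normality condition in the definition of a ruling forbids $c_-$ from belonging to any ruling, so every ruling of $\mathcal{D}$ uses only positive crossings and $\Gamma(\mathcal{D})=O\Gamma(\mathcal{D})$. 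Granting this, the two displayed sums coincide, and the HOMFLY computation above shows their common value is nonzero (so $O\Gamma(\mathcal{D})\neq\emptyset$) and equal to the non-negative polynomial $P_{K_+}(z;1-\chi(K_+))$.

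The step I expect to be the main obstacle is precisely this front construction in the incoherent case: verifying that un-smoothing $c_-$ on the Legendrian front of $D_0$ costs exactly one left cusp and one negative crossing — so that $tb(\mathcal{D})=c(D)-s(D)-2$, which equals $-\chi(K)$ via the type II identity $\chi(K)=\chi(D)+2$ — and, above all, that this negative crossing can be realized by a nested local model through which no ruling passes. Without the last point one still obtains, from $\Gamma(\mathcal{D})\supseteq O\Gamma(\mathcal{D})\neq\emptyset$ and Rutherford's formula, that $D_K(z;1-\chi(K))$ is nonzero and non-negative; the nested-model claim is exactly what upgrades this to the equality $D_K(z;1-\chi(K))=P_K(z;1-\chi(K))$.
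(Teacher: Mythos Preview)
Your skein computation for the HOMFLY part contains an error that propagates through the whole argument. The oriented smoothing at $c_-$ does \emph{not} merge the Seifert circles $s$ and $s'$: Seifert's algorithm already smooths every crossing, so the Seifert state of $D_0$ coincides with that of $D$. Hence $s(D_0)=s(D)$ and $\chi(D_0)=\chi(D)+1$, not $\chi(D)$. Consequently $\min\deg_v P_{K_0}=-\chi(D)$, so $v^{-1}zP_{K_0}$ has lowest $v$-degree $-1-\chi(D)=1-\chi(K)$, \emph{the same} as $v^{-2}P_{K_+}$, and the skein relation only gives
\[ P_K(z;1-\chi(K))=P_{K_+}(z;1-\chi(K_+))-z\,P_{K_0}(z;1-\chi(K_0)), \]
a difference of two non-negative polynomials from which neither non-vanishing nor non-negativity follows. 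The same miscount of $s(D_0)$ infects your front construction in the ``incoherent'' case: you would start with $l\mbox{-cusp}=s(D_0)=s(D)$, not $s(D)-1$, so adding one more cusp to reinstate $c_-$ yields $tb=c(D)-s(D)-3\neq -\chi(K)$. The coherent/incoherent split is itself unclear: at a crossing of an oriented diagram the two strands are always compatibly oriented for the Seifert smoothing, and if ``coherent'' is meant to be the case where $c_-$ and $c_+$ cancel by Reidemeister~II, that requires adjacency along \emph{both} $s$ and $s'$, which the type~II hypothesis does not guarantee.

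The paper avoids all of this by building a front $\mathcal{D}$ directly from $D$ rather than from $D_0$: the positive part is realized as in~\cite{ta}, giving $l\mbox{-cusp}(\mathcal{D})=s(D)$, and $c_-$ is drawn in a configuration that violates the normality condition, so $c_-$ lies in no ruling and $\Gamma(\mathcal{D})=O\Gamma(\mathcal{D})$ automatically. Then $tb(\mathcal{D})=(c(D)-2)-s(D)=-\chi(K)$, and the explicit ruling $\rho=C(\mathcal{D})\setminus\{c_-,c\}$, with $c$ the positive crossing between $s,s'$ nearest to $c_-$, shows $O\Gamma(\mathcal{D})\neq\emptyset$. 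Rutherford's formula~(\ref{eqn:ruling-formula}) then yields both equalities and the non-negativity at once, with no skein step and no case analysis.
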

\begin{proof}
We view an almost positive diagram of type II as a front diagram $\mathcal{D}$ as shown in Figure \ref{fig:almost-positive-front}, where the box represents a positive diagram part viewed as a front diagram. A ruling of $\mathcal{D}$ cannot contain the negative crossing of $\mathcal{D}$, so $\Gamma(\mathcal{D}) = O\Gamma(\mathcal{D})$. On the other hand, since $D$ is of type II, the Seifert circles $s$ and $s'$ connected by the negative crossing $c_-$ is also connected by a positive crossing. We take such a crossing $c$ so that there are no such crossing between $c$ and $c^{-}$. Let $\rho=C(\mathcal{D}) \setminus \{c_-,c\}$. Then $\rho$ is a ruling so $\Gamma(\mathcal{D}) = O\Gamma(\mathcal{D}) \neq \emptyset$, and
\[ \overline{tb}(K) = tb(\mathcal{D}) = (c(D)-2)-s(D) = -\chi(D)-2 = -\chi(K).\]
Thus by (\ref{eqn:ruling-formula}) we get the desired identity.
\end{proof} 

\begin{corollary}\label{cor:type-II}
If $K$ is almost positive of type II, 
\[ \min \deg_a D_K(z,a)=\max \deg_z P_K(v,z)=1-\chi(K).\]
\end{corollary}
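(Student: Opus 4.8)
The plan is to read off both equalities directly from Theorem \ref{theorem:almost-type-II}. Suppose $K$ admits an almost positive diagram of type II, and realize it as a front diagram $\mathcal{D}$ as in the proof of Theorem \ref{theorem:almost-type-II}, so that $\overline{tb}(K) = tb(\mathcal{D}) = -\chi(K)$, and so that $\mathcal{D}$ carries a ruling. For the HOMFLY equality: by Theorem \ref{theorem:almost-type-II} the coefficient polynomial $P_K(z;1-\chi(K))$ is nonzero, so $\max\deg_z P_K(v,z) \geq 1-\chi(K)$. For the reverse inequality, observe that the ruling formula (\ref{eqn:ruling-formula}) expresses $P_K(z;1-\chi(K))$ as a graded count of oriented rulings, and each $j(\rho) = \#\rho - l\text{-cusp}(\mathcal{D}) + 1$ is maximized by the largest possible ruling; since no ruling of $\mathcal{D}$ can contain the negative crossing, $\#\rho \leq c(D)-1$ — and in fact the combinatorics of the type II configuration force $\#\rho \leq c(D)-2$, giving $j(\rho) \leq (c(D)-2) - s(D) + 1 = -\chi(D)-1 = 1-\chi(K)$. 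Hence $\max\deg_z P_K(v,z) = 1-\chi(K)$.

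For the Dubrovnik equality, I would invoke the Kauffman bound $\overline{tb}(K)+1 \leq \min\deg_a D_K(a,z)$ together with the fact, recorded in the excerpt, that a front diagram admitting a ruling attains $\overline{tb}(K)$ and forces $\overline{tb}(K)+1 = \min\deg_a D_K(a,z)$. Since $\mathcal{D}$ does admit a ruling (shown in the proof of Theorem \ref{theorem:almost-type-II}) and $\overline{tb}(K) = -\chi(K)$, we get $\min\deg_a D_K(a,z) = \overline{tb}(K)+1 = 1-\chi(K)$. Combining the two computations yields $\min\deg_a D_K(a,z) = \max\deg_z P_K(v,z) = 1-\chi(K)$, which is the assertion. (Note the statement's variable order $D_K(z,a)$ versus the paper's $D_K(a,z)$ is a harmless typo.)

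The only point requiring genuine care — the potential main obstacle — is justifying the sharp upper bound $\max\deg_z P_K(v,z)\leq 1-\chi(K)$, i.e. that every oriented ruling $\rho$ of $\mathcal{D}$ omits at least \emph{two} crossings, not just the negative one. This is exactly the type II hypothesis in action: the pair of Seifert circles $s,s'$ joined by $c_-$ is also joined by a positive crossing $c$, and in the front picture $c_-$ and $c$ sit in a configuration where a ruling normality condition at one obstructs inclusion of the other, so any ruling must discard both. Once this is spelled out, the degree bound drops out of the grading $j(\rho)$ and everything else is bookkeeping with $tb$, $l\text{-cusp}$, $s(D)$, $c(D)$, and $\chi(D)$, together with the already-established inequalities. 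Alternatively, and perhaps more cleanly, one can bypass the ruling count for the HOMFLY upper bound entirely: $\max\deg_z P_K(v,z)\leq 1-\chi(D)-2 = 1-\chi(K)$ follows from the standard HOMFLY $z$-degree estimate $\max\deg_z P_K(v,z)\leq c(D)-s(D) - \text{(number of components)} + 1$ applied to a suitable reduced form, but invoking Theorem \ref{theorem:almost-type-II}'s ruling setup keeps the proof self-contained.
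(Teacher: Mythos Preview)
Your Dubrovnik argument is correct and is precisely the paper's implicit proof: the front diagram $\mathcal{D}$ constructed in the proof of Theorem~\ref{theorem:almost-type-II} carries a ruling, so the discussion preceding that theorem gives $\overline{tb}(K)+1=\min\deg_a D_K(a,z)=\min\deg_v P_K(v,z)$, and combining this with $\overline{tb}(K)=-\chi(K)$ finishes it. In fact this same line already yields $\min\deg_v P_K(v,z)=1-\chi(K)$, which is exactly what the paper's forward reference (just before Corollary~\ref{cor:HOMFLY-almost-positive}) says Corollary~\ref{cor:type-II} is meant to establish; the printed ``$\max\deg_z P_K$'' is almost certainly a slip for ``$\min\deg_v P_K$''.

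Taking the statement literally, your argument for $\max\deg_z P_K(v,z)=1-\chi(K)$ has a genuine gap. Rutherford's formula~(\ref{eqn:ruling-formula}) computes only the \emph{single} coefficient polynomial $P_K(z;\,tb(\mathcal{D})+1)$; it says nothing about $P_K(z;i)$ for larger $i$. Thus even a sharp bound $j(\rho)\le 1-\chi(K)$ for every oriented ruling would only bound $\deg_z P_K(z;1-\chi(K))$, not $\max\deg_z P_K(v,z)$. Your lower-bound step suffers the same confusion: $P_K(z;1-\chi(K))\neq 0$ means the $v^{1-\chi(K)}$-coefficient is a nonzero polynomial in $z$, which constrains $\min\deg_v P_K$, not $\max\deg_z P_K$. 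The alternative you float via Morton's $z$-degree bound does not close the gap either: applied to $D$ it gives only $\max\deg_z P_K\le 1-\chi(D)=3-\chi(K)$, two off from the target. Establishing the literal $\max\deg_z$ equality for type~II would require an argument outside the ruling framework of Theorem~\ref{theorem:almost-type-II}.
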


\begin{figure}[htbp]
\includegraphics*[width=100mm]{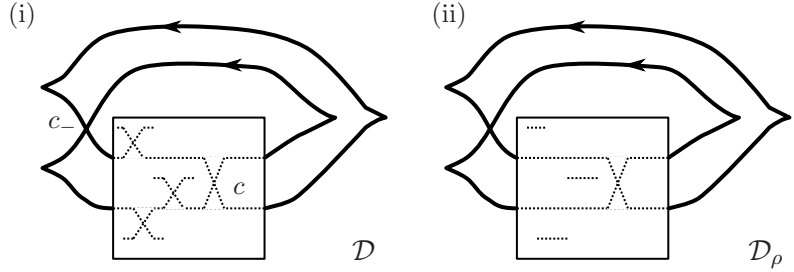}
\begin{picture}(0,0)
\put(-300,90) {(i)}
\put(-140,90) {(ii)}
\put(-285,50) {$c_-$}
\put(-215,25) {$c$}
\put(-170,0) {\large $\mathcal{D}$}
\put(-20,0) {\large $\mathcal{D}_{\rho}$}
\end{picture}
\caption{(i) Front diagram of almost positive diagram of type II; the box represent a front diagram consisting of positive crossings. (ii) Non-trivial ruling $\rho=C(\mathcal{D}) \setminus \{c_-,c\}$ and its resolution $\mathcal{D}_{\rho}$ }
\label{fig:almost-positive-front}
\end{figure} 

Since there are almost positive knot of type I that fails to have the property stated in Theorem \ref{theorem:almost-type-II} (for example $10_{145}$), we have the following, which was mentioned in \cite{st} without proof;

\begin{corollary}
There exists an almost positive knot of type I which does not admit an almost positive diagram of type II.
\end{corollary}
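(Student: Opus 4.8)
The plan is to exhibit a concrete almost positive knot of type I that violates the conclusion of Theorem \ref{theorem:almost-type-II}, and then argue by contradiction. The natural candidate is $10_{145}$: it is known to be almost positive and to admit an almost positive diagram of type I (this is precisely the example discussed in Section \ref{sec:ap-I-vs-II}). So first I would record that $10_{145}$ is almost positive of type I. Next, I would note that if $10_{145}$ also admitted an almost positive diagram of type II, then Theorem \ref{theorem:almost-type-II} would force
\[ P_{10_{145}}(z;1-\chi(10_{145}))=D_{10_{145}}(z;1-\chi(10_{145})) \neq 0,\]
and moreover that this coefficient polynomial would be non-negative.

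The heart of the argument is then to check that $10_{145}$ fails this property. I would compute (or quote from a standard table such as KnotInfo) the HOMFLY and Kauffman polynomials of $10_{145}$, together with its genus (equivalently $\chi$). Concretely, $10_{145}$ has genus $2$, so $1-\chi = 4$; one then inspects the coefficient of $v^{4}$ in $P_{10_{145}}(v,z)$, or equivalently uses the known fact that $10_{145}$ fails the Yokota/Cromwell positivity obstruction. The cleanest route is to invoke exactly the obstruction already cited in the text: Cromwell's bound $c(K)\le 4g(K)$ for positive knots with monic Conway polynomial, or the Yokota equality (\ref{eqn:Yokota}) — in the latter form the failure of (\ref{eqn:Yokota}) (either the extremal coefficient vanishes, or it is not a non-negative polynomial) is exactly the statement that $10_{145}$ cannot satisfy the conclusion of Theorem \ref{theorem:almost-type-II}. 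Either the relevant coefficient of $P$ or $D$ at $v^{1-\chi}$ vanishes, or it has a negative coefficient; any one of these contradicts what a type II diagram would give.

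Putting the pieces together: since $10_{145}$ is almost positive of type I but does not satisfy the identity of Theorem \ref{theorem:almost-type-II}, it cannot admit any almost positive diagram of type II. This proves the existence statement. The main obstacle — really the only non-formal point — is verifying the polynomial computation for $10_{145}$, i.e. pinning down which of the two failure modes (vanishing extremal coefficient versus non-positivity) actually occurs and making sure the value of $\chi$ is correct; this is a finite check against standard tables rather than a genuine difficulty. A secondary, more cosmetic point is that one should state that the argument does not depend on the particular diagram chosen: the obstruction is an invariant of the knot type, so "does not satisfy (\ref{eqn:Yokota})" is well-defined and rules out all type II diagrams at once.
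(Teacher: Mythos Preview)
Your proposal is correct and is essentially the same argument the paper gives: exhibit $10_{145}$ as an almost positive knot of type I, observe that it fails the conclusion of Theorem \ref{theorem:almost-type-II} (equivalently, Yokota's identity (\ref{eqn:Yokota})), and conclude that it can admit no almost positive diagram of type II. The paper compresses this into a single sentence preceding the corollary, but the logic and the chosen example are identical to yours.
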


A similar argument can be applied to establish the same equality for certain type of not-good successively almost positive diagrams.

\subsection{(Good) Successively almost positive v.s. strongly quasipositive}

A strongly quasipositive knots are not necessarily successively almost positive;  for a given link $L$, there exists a strongly quasipositive link $L'$ such that $\nabla_L(z)=\nabla_{L'}(z)$ \cite[88 Corollary]{ru}. See \cite[Section 5]{si} for details and concrete examples. Thus the Conway polynomial of strongly quasipositive link may not be non-negative.

\subsection{Questions}

We summarize the relations among various positivities discussed in the paper;

\[
\begin{array}{ccc}
\{\mbox{Positive}\} & & \\
\rotatebox[origin=c]{-90}{$\subsetneqq$}& & \\
\{\mbox{Almost positive of type I}\}& {\subset}_{(a)} & \{\mbox{Almost positive}\}\\
\cap_{(b)}& &\rotatebox[origin=c]{-90}{$\subset$}_{(c)}\\
\{\mbox{Good successively almost positive} \} & \subset_{(d)} &\{\mbox{Successively almost positive}\} \\
\rotatebox[origin=c]{-90}{$\subsetneqq$}& (\rotatebox[origin=c]{225}{$\subsetneqq$}?)& \\ 
\{\mbox{Strongly quasipositive} \} & &
\end{array}
\]

\begin{question}
Are the inclusions (a), (b), (c), (d) strict?
\end{question}

The strictness question of (a) appeared in \cite[Question 3]{st}.
We have seen that
\[ \{\mbox{Almost positive of type I}\} \not \subset \{\mbox{Almost positive of type II}\}\]
We are asking whether the converse inclusion 
\[\{\mbox{Almost positive of type II}\} \subset \{\mbox{Almost positive of type I}\}\]
holds or not.

As for the strictness of (b), one candidate of the properties of almost positive links which are not extended to good successively almost positive links is \cite[Theorem 5]{st}; it says that $\min \deg V_K(t)=\frac{1}{2}(1-\chi(K))$ when $K$ is almost positive. Although several simple successively almost positive links still satisfy the same property, the proof of the property utilizes a state-sum argument which we cannot generalize to successively almost positive case in an obvious manner.
 
\begin{question}
Is it true that $\min \deg V_K(t)=\frac{1}{2}(1-\chi(K))$ when $K$ is good successively almost positive?
\end{question}

As for the strictness of (d), it is interesting to ask to what extent a successively almost positive link shares the same properties as good successively almost positive links;

\begin{question}
Assume that $K$ is successively almost positive.
\begin{itemize}
\item Is $K$ strongly quasipositive?
\item Is $\max \deg_{z} \nabla_K(z)=1-\chi(K)$ when $K$ is non-split? 
\item Is $\max \deg_z P_K(v,z) = \min \deg_v P_K(v,z)= 1-\chi(K)$?
\end{itemize}
\end{question}

In Theorem
\ref{cor:finite-concordance} we showed the finiteness of successively almost positive knots in a topological concordance class, under the assumption that the topological concordance genus is sufficiently large. It is natural to investigate whether this additional assumption is necessary or not.

\begin{question}
Does every topological concordance class contain at most finitely many successively $k$-almost positive knots?
\end{question}

In \cite{oz} Ozawa showed the visibility of primeness for positive diagrams; a link represented by a positive diagram $D$ is non-prime if and only if the diagram $D$ is non-prime. 

\begin{question}
If a knot $K$ prime if $K$ is represented by a prime, good successively $k$-almost positive diagram?
\end{question}


\begin{thebibliography}{1}

\bibitem[AH]{ah} K. Appel and W. \ Haken, 
{\em Every planar map is four colorable,}
Contemporary Mathematics, 98. American Mathematical Society, Providence, RI, 1989. xvi+741 pp. 

\bibitem[BDL]{bdl}
S.\ Baader, P.\ Dehornoy, and L. Liechti, 
{\em Signature and concordance of positive knots.}
Bull. Lond. Math. Soc. 50 (2018), no. 1, 166--173.

\bibitem[Cr]{cr}
P.\ Cromwell, 
{\em Homogeneous links,}
J. London Math. Soc. (2) 39 (1989), no. 3, 535--552.

\bibitem[Et]{et}
J.\ Etnyre,
{\em Legendrian and transversal knots,}
Handbook of knot theory, 105--185, Elsevier B. V., Amsterdam, 2005.

\bibitem[FLL]{fll} P. Feller, L. Lewark and A. Lobb,
{\em Almost positive links are strongly quasipositive,}
arXiv:1809.06692v2.

\bibitem[GL]{gl}
C.\ Gordon, C. A. Litherland,
{\em On the signature of a link,}
Invent. Math. 47 (1978), no. 1, 53--69.

\bibitem[HIK]{hik} J. Hamer, T. Ito and K. Kawamuro,
{\em Positivities of knots and links and the defect of Bennequin inequality,}
Exp. Math. to appear.

\bibitem[IMT]{imt} T. Ito, K. Motegi, and M. Teragaito,
{\em Generalized torsion and Dehn filling,}
Topol. Appl. 301 (2021) 107515.


\bibitem[Li]{li}
C.\ Livingston, 
{\em Computations of the Ozsv\'ath-Szab\'o knot concordance invariant.}
Geom. Topol. 8 (2004), 735--742.

\bibitem[MP]{mp}
K. \ Murasugi \ and J.\ Przytycki, 
{\em An index of a graph with applications to knot theory,}
Mem. Amer. Math. Soc. 106 (1993), no. 508, x+101 pp.

\bibitem[Oz]{oz}
M. Ozawa, 
{\em Closed incompressible surfaces in the complements of positive knots.}
Comment. Math. Helv. 77 (2002), no. 2, 235--243.

\bibitem[OS]{os}
P.\ Ozsv\'ath, and Z. Szab\'o
{\em Knot Floer homology and the four-ball genus.}
Geom. Topol. 7 (2003), 615--639.

\bibitem[PT]{pt}
J. Przytycki and K. Taniyama,
{\em Almost positive links have negative signature.}
J. Knot Theory Ramifications 19 (2010), no. 2, 187--289.



\bibitem[Ra]{ra}
J.\ Rasmussen, 
{\em Khovanov homology and the slice genus,}
Invent. Math. 182 (2010), no. 2, 419--447.

\bibitem[Rud1]{ru1}L.\ Rudolph, 
{\em Constructions of quasipositive knots and links. III. A characterization of quasipositive Seifert surfaces,}
Topology 31 (1992), no. 2, 231--237.

\bibitem[Rud2]{ru}
L.\ Rudolph, 
{\em Knot theory of complex plane curves,}
Handbook of knot theory, 349--427, Elsevier B. V., Amsterdam, 2005.



\bibitem[Rut]{rt}
D. \ Rutherford,
{\em Thurston-Bennequin number, Kauffman polynomial, and ruling invariants of a Legendrian link: the Fuchs conjecture and beyond.}
Int. Math. Res. Not. 2006, Art. ID 78591, 15 pp.


\bibitem[Sh]{sh}
A.\ Shumakovitch,
{\em Rasmussen invariant, slice-Bennequin inequality, and sliceness of knots.} 
J. Knot Theory Ramifications 16 (2007), no. 10, 1403--1412.

\bibitem[Si]{si}
M.\ Silvero,
{\em Strongly quasipositive links with braid index 3 have positive Conway polynomial,}
J. Knot Theory Ramifications 25 (2016), no. 12, 1642015, 14 pp.

\bibitem[St1]{st1}
A.\ Stoimenow, 
{\em Knots of genus one or on the number of alternating knots of given genus.}
Proc. Amer. Math. Soc. 129 (2001), no. 7, 2141--2156.

\bibitem[St2]{st}
A.\ Stoimenow, 
{\em On polynomials and surfaces of variously positive links,}
J. Eur. Math. Soc. (JEMS) 7 (2005), no. 4, 477--509.


\bibitem[Ta]{ta}
T. \ Tanaka, 
{\em Maximal Bennequin numbers and Kauffman polynomials of positive links.}
Proc. Amer. Math. Soc. 127 (1999), no. 11, 3427--3432.

\bibitem[Yo]{yo}
Y. \ Yokota,
{\em Polynomial invariants of positive links,}
Topology 31 (1992), no. 4, 805--811.

\end{thebibliography}
\end{document}